\title{Strongly Liftable Schemes and the Kawamata-Viehweg Vanishing
in Positive Characteristic III
\footnote{This paper was partially supported by the National Natural Science Foundation of China
(Grant No.\ 11231003 and 11271070), and the Scientific Research Foundation for the Returned Overseas
Chinese Scholars, State Education Ministry.}}
\author{Qihong Xie \and Jian Wu}
\date{Dedicated to Professor Yujiro Kawamata for his sixtieth birthday}
\theoremstyle{plain}
\newtheorem{prop}{Proposition}[section]
\newtheorem{lem}[prop]{Lemma}
\newtheorem{thm}[prop]{Theorem}
\newtheorem{cor}[prop]{Corollary}
\theoremstyle{definition}
\newtheorem{defn}[prop]{Definition}
\newtheorem*{ack}{Acknowledgments}
\newtheorem*{nota}{Notation}
\theoremstyle{remark}
\newtheorem{rem}[prop]{Remark}
\newtheorem{ex}[prop]{Example}
\newcommand{\Q}{\mathbb Q}
\newcommand{\Z}{\mathbb Z}
\newcommand{\N}{\mathbb N}
\newcommand{\F}{\mathbb F}
\newcommand{\A}{\mathbb A}
\newcommand{\PP}{\mathbb P}
\newcommand{\OO}{\mathcal O}
\newcommand{\II}{\mathcal I}
\newcommand{\LL}{\mathcal L}
\newcommand{\TT}{\mathcal T}
\newcommand{\Supp}{\mathop{\rm Supp}\nolimits}
\newcommand{\Sing}{\mathop{\rm Sing}\nolimits}
\newcommand{\spec}{\mathop{\rm Spec}\nolimits}
\newcommand{\Gal}{\mathop{\rm Gal}\nolimits}
\newcommand{\tor}{\mathop{\rm Tor}\nolimits}
\newcommand{\divisor}{\mathop{\rm div}\nolimits}
\newcommand{\ra}{\rightarrow}
\newcommand{\inj}{\hookrightarrow}
\newcommand{\wt}{\widetilde}
\newcommand{\al}{\alpha}
\begin{document}

\maketitle

\begin{abstract}
A smooth scheme $X$ over a field $k$ of positive characteristic
is said to be strongly liftable over $W_2(k)$, if $X$ and all prime
divisors on $X$ can be lifted simultaneously over $W_2(k)$.
In this paper, we first deduce the Kummer covering trick over $W_2(k)$,
which can be used to construct a large class of smooth projective
varieties liftable over $W_2(k)$, and to give a direct proof of
the Kawamata-Viehweg vanishing theorem on strongly liftable schemes.
Secondly, we generalize almost all of the results in \cite{xie10,xie11}
to the case where everything is considered over $W(k)$, the ring of
Witt vectors of $k$.
\end{abstract}

\setcounter{section}{0}
\section{Introduction}\label{S1}

Throughout this paper, we always work over {\it an algebraically
closed field $k$ of characteristic $p>0$} unless otherwise stated.
A smooth scheme $X$ is said to be strongly liftable over $W_2(k)$,
if $X$ and all prime divisors on $X$ can be lifted simultaneously
over $W_2(k)$. This notion was first introduced in \cite{xie10} to
study the Kawamata-Viehweg vanishing theorem in positive characteristic,
furthermore, some examples and properties of strongly liftable schemes
were also given in \cite{xie10,xie11}.

In this paper, we shall continue to study strongly liftable schemes.
First of all, we deduce the Kummer covering trick over $W_2(k)$ by means of the logarithmic techniques
developed by K. Fujiwara, K. Kato and C. Nakayama. The Kummer covering trick over $W_2(k)$ can be used
to construct a large class of smooth projective varieties liftable over $W_2(k)$
(see \S \ref{S3} for more details).

\begin{thm}\label{1.1}
Let $X$ be a smooth projective variety strongly liftable over $W_2(k)$, $D$ a $\Q$-divisor on $X$
such that the fractional part $\langle D\rangle=\sum_{i\in I}\displaystyle{\frac{a_i}{b_i}}D_i$
satisfies that $0<a_i<b_i$, $(a_i,b_i)=1$ and $p\nmid b_i$ for all $i\in I$, and $\sum_{i\in I}D_i$
is simple normal crossing. Then there exists a finite Galois morphism $\tau: Y\ra X$ with Galois group
$G=\Gal(K(Y)/K(X))$, such that $\tau^*D$ is integral and $Y$ is a smooth projective variety liftable
over $W_2(k)$.
\end{thm}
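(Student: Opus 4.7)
The plan is to realize $\tau\colon Y\ra X$ as a fibre product of classical cyclic covers of $X$ of degrees $b_i$, with the auxiliary data chosen so that the same construction runs verbatim over $W_2(k)$ and produces the required lift $\wt Y$. Since $X$ is strongly liftable, I would first fix a smooth lift $\wt X$ of $X$ over $W_2(k)$ together with a simple normal crossing divisor $\sum_{i\in I}\wt D_i\subset\wt X$ lifting $\sum_{i\in I}D_i$. Because $X$ is projective it carries an ample effective divisor $H$; writing $H$ as a sum of prime divisors and invoking strong liftability yields a divisor lift $\wt H$ on $\wt X$, which is automatically ample.

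The next step would be to build effective divisors $T_i$ on $X$ together with lifts $\wt T_i$ on $\wt X$ such that $D_i+T_i\sim b_imH$ for a single $m\gg 0$ and such that $\sum_{i\in I}(D_i+T_i)$ is simple normal crossing. For $m$ large the line bundles $\LL_i:=\OO_X(b_imH-D_i)$ and $\wt\LL_i:=\OO_{\wt X}(b_im\wt H-\wt D_i)$ are very ample with $H^1(X,\LL_i)=0$, so the restriction $H^0(\wt X,\wt\LL_i)\ra H^0(X,\LL_i)$ is surjective. A Bertini-type argument would then select $t_i\in H^0(X,\LL_i)$ whose zero divisor $T_i$ is smooth and such that $\sum_{i\in I}(D_i+T_i)$ is snc; lifting each $t_i$ to $\wt t_i\in H^0(\wt X,\wt\LL_i)$ defines a Cartier divisor $\wt T_i\subset\wt X$, smooth over $W_2(k)$ and restricting to $T_i$, and consequently $\sum_{i\in I}(\wt D_i+\wt T_i)$ is relative snc over $\spec W_2(k)$.

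With $\MM:=\OO_X(mH)$, $\wt\MM:=\OO_{\wt X}(m\wt H)$, and sections $\sigma_i\in H^0(X,\MM^{b_i})$, $\wt\sigma_i\in H^0(\wt X,\wt\MM^{b_i})$ cutting out $D_i+T_i$ and $\wt D_i+\wt T_i$, I would form the cyclic covers
\[
\pi_i\colon Y_i=\Spec_X\Big(\bigoplus_{j=0}^{b_i-1}\MM^{-j}\Big)\ra X,\qquad \wt\pi_i\colon\wt Y_i=\Spec_{\wt X}\Big(\bigoplus_{j=0}^{b_i-1}\wt\MM^{-j}\Big)\ra\wt X,
\]
with algebra structures induced by $\sigma_i$ and $\wt\sigma_i$. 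Since $p\nmid b_i$ and $D_i+T_i$ is smooth, $Y_i$ is smooth with $\pi_i^*D_i=b_iE_i$, and the analogous argument over $W_2(k)$ makes $\wt Y_i$ smooth over $W_2(k)$. Setting $Y:=Y_1\times_X\cdots\times_X Y_{|I|}$ and $\wt Y:=\wt Y_1\times_{\wt X}\cdots\times_{\wt X}\wt Y_{|I|}$, the snc position of the various branch loci guarantees that both are smooth, so $\wt Y$ is a lift of $Y$, the morphism $\tau\colon Y\ra X$ is Galois with group $G=\prod_{i\in I}\Z/b_i\Z$, and $b_i\mid \tau^*D_i$ for each $i$; hence $\tau^*D$ is integral.

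The crux of the argument is the simultaneous lifting of $X$, the $D_i$, and the freshly constructed $T_i$ while maintaining the snc arrangement on $\wt X$. The elementary route sketched above combines strong liftability with Serre vanishing and Bertini, but conceptually this is exactly the point at which the logarithmic framework of Fujiwara, Kato and Nakayama alluded to in the introduction intervenes: once $(\wt X,\sum_{i\in I}(\wt D_i+\wt T_i))$ is log smooth over $\spec W_2(k)$, the Kummer-type cover $\wt Y\ra\wt X$ becomes Kummer log étale, and its existence as a lift of $Y\ra X$ is automatic, bypassing any further explicit obstruction computation.
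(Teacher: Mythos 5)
There is a genuine gap at the heart of your construction: the individual cyclic covers $Y_i$ are not smooth. The degree-$b_i$ cover $\pi_i\colon Y_i\ra X$ determined by $\MM^{b_i}\cong\OO_X(D_i+T_i)$ is, locally at a point of $D_i\cap T_i$, given by $z^{b_i}=t\cdot s$ with $t,s$ two distinct members of a regular system of parameters; this hypersurface is normal (so normalizing does not change it) but is singular in codimension $2$ as soon as $b_i\geq 2$, which is forced by $0<a_i<b_i$. Your sentence ``Since $p\nmid b_i$ and $D_i+T_i$ is smooth, $Y_i$ is smooth'' presupposes $D_i\cap T_i=\emptyset$, whereas your Bertini step only arranges $\sum_i(D_i+T_i)$ to be simple normal crossing; since $T_i$ moves in the very ample system $|b_imH-D_i|$, it necessarily meets $D_i$ whenever $\dim X\geq 2$, and no choice of $T_i$ can avoid this. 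Consequently $Y$, which dominates each $Y_i$, is singular, and the log-geometric remark at the end does not repair this: the cover is indeed Kummer \'etale and log smooth, but the underlying scheme retains these toric singularities. This is precisely why the paper follows Kawamata \cite[Theorem 1-1-1, Lemma 1-1-2]{kmm} and attaches to each index $i$ not one but $d=\dim X$ general auxiliary divisors $H^{(i)}_k\in|mM-D_i|$, defining $Y$ as the normalization of $X$ in the field generated by the $m$-th roots of all the sections cutting out $H^{(i)}_k+D_i$: at every closed point $x$ one can pick, for each $i$ with $x\in D_i$, some $k_i$ with $x\notin H^{(i)}_{k_i}$, and then (after normalization, which replaces $\varphi^{(i)}_{k}$ by the ratios $\varphi^{(i)}_{k}/\varphi^{(i)}_{k_i}$) the cover is locally the extraction of $m$-th roots of \emph{distinct} coordinates and of units, hence smooth.

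The remainder of your outline is sound and close to the paper where it overlaps. Lifting the auxiliary divisors by combining strong liftability with surjectivity of $H^0(\wt{X},\wt{\LL}_i)\ra H^0(X,\LL_i)$ is fine, and your closing observation that the lift of the cover should be automatic once everything is log smooth is exactly how the paper argues: by Theorem \ref{3.3} a log \'etale morphism has vanishing relative log differentials, so obstruction, torsor and automorphism groups all vanish, and by Lemma \ref{3.4} the unique lift of a finite Kummer \'etale cover is again one, hence log smooth and flat over $W_2(k)$. But once $Y$ is corrected to be a normalization rather than a global relative $\Spec$ of an explicit locally free algebra, your explicit construction of $\wt{Y}$ by lifting sections no longer makes sense globally --- this is the difficulty recorded in Remark \ref{3.8} --- so the logarithmic lifting is not an optional alternative but the essential step.
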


As an application, the Kummer covering trick over $W_2(k)$ gives a direct proof of the Kawamata-Viehweg
vanishing theorem on strongly liftable schemes.

\begin{cor}\label{1.2}
Let $X$ be a strongly liftable smooth projective variety of dimension $d$, and $D$ an ample $\Q$-divisor on $X$
such that $\Supp\langle D\rangle$ is simple normal crossing. Then we have $H^i(X,K_X+\ulcorner D\urcorner)=0$
for any $i>d-\inf(d,p)$.
\end{cor}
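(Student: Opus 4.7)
The plan is to reduce Corollary~\ref{1.2} to Deligne--Illusie--Raynaud Kodaira--Akizuki--Nakano vanishing on a $W_2(k)$-liftable Kummer cover provided by Theorem~\ref{1.1}, and then descend by the character decomposition of $\tau_*\OO_Y$. The hypotheses on $D$ are exactly those of Theorem~\ref{1.1}, which therefore supplies a finite Galois cover $\tau\colon Y\to X$ with Galois group $G$ of order $\prod_i b_i$ (coprime to $p$), such that $\tau^*D$ is integral Cartier on $Y$ and $Y$ is a smooth projective $d$-dimensional variety liftable over $W_2(k)$. Since $\tau$ is finite surjective and $D$ is ample, the divisor $\tau^*D$ is ample on $Y$, so Deligne--Illusie--Raynaud vanishing for $W_2(k)$-liftable varieties gives
\[
H^i(Y,\,K_Y+\tau^*D)=0 \quad\text{for all } i>d-\inf(d,p).
\]

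To transfer this vanishing to $X$, I would identify $\OO_X(K_X+\ulcorner D\urcorner)$ with the trivial-character summand of $\tau_*\OO_Y(K_Y+\tau^*D)$. Writing $\tau^*D_i=b_iE_i$ and combining the Hurwitz formula $K_Y=\tau^*K_X+\sum_i(b_i-1)E_i$ with $\tau^*D=\tau^*\lfloor D\rfloor+\sum_i a_iE_i$ and $\ulcorner D\urcorner=\lfloor D\rfloor+\sum_i D_i$ (which uses $0<a_i<b_i$, so $\lceil a_i/b_i\rceil=1$) gives
\[
K_Y+\tau^*D=\tau^*(K_X+\ulcorner D\urcorner)+\sum_i(a_i-1)E_i,
\]
whence by the projection formula
\[
\tau_*\OO_Y(K_Y+\tau^*D)=\OO_X(K_X+\ulcorner D\urcorner)\otimes\tau_*\OO_Y\Bigl(\textstyle\sum_i(a_i-1)E_i\Bigr).
\]
Since $0\leq a_i-1<b_i$, the $G$-invariant part of $\tau_*\OO_Y(\sum_i(a_i-1)E_i)$ equals $\OO_X$, and the normalized trace $|G|^{-1}\Trace_\tau$ (available because $p\nmid|G|$) splits $\OO_X(K_X+\ulcorner D\urcorner)$ off as a direct summand. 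Taking cohomology, $H^i(X,K_X+\ulcorner D\urcorner)$ embeds as a direct summand of $H^i(Y,K_Y+\tau^*D)$, which vanishes in the stated range.

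The main obstacle is the last identification: verifying that the trivial-character summand really is $\OO_X(K_X+\ulcorner D\urcorner)$. This boils down to a local computation of the $G$-invariants of $\tau_*\OO_Y(\sum_i(a_i-1)E_i)$, which uses the strict inequality $a_i-1<b_i$ together with the ramification relation $\tau^*D_i=b_iE_i$ to prevent any rational section on $X$ with a genuine pole along some $D_i$ from pulling back to a section of $\OO_Y(\sum_i(a_i-1)E_i)$. Once this is settled, the remaining inputs --- Theorem~\ref{1.1}, the Deligne--Illusie--Raynaud vanishing, and the Maschke averaging supplied by $p\nmid\prod_i b_i$ --- are essentially off-the-shelf.
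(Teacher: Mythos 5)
Your overall strategy---pass to the Kummer cover of Theorem \ref{1.1}, apply Deligne--Illusie vanishing on the $W_2(k)$-liftable cover $Y$, and descend via the $G$-invariant direct summand using $p\nmid |G|$---is exactly the route the paper takes. But there is one genuine gap at the very first step: you assert that ``the hypotheses on $D$ are exactly those of Theorem~\ref{1.1},'' and they are not. Corollary \ref{1.2} only assumes $D$ ample with $\Supp\langle D\rangle$ simple normal crossing; it does \emph{not} assume that the denominators $b_i$ in $\langle D\rangle=\sum_i (a_i/b_i)D_i$ are prime to $p$, whereas Theorem \ref{1.1} requires $p\nmid b_i$ for all $i$. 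Without that condition the Kummer cover is not tamely ramified and the whole construction (as well as your Maschke averaging by $|G|^{-1}$) breaks down. The paper bridges this by a perturbation: since ampleness is an open condition, one may move each coefficient $a_i/b_i$ slightly so that $D$ stays ample, the new denominators satisfy $p\nmid b_i$, and $\ulcorner D\urcorner$ is unchanged; only then is Theorem \ref{1.1} applicable. This reduction is short, but it is the one substantive step your argument is missing.

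A second, lesser caveat: your local verification that $\OO_X(K_X+\ulcorner D\urcorner)$ is the $G$-invariant summand of $\tau_*\OO_Y(K_Y+\tau^*D)$ is modeled on a cover ramified only along $\Supp\langle D\rangle$ with $\tau^*D_i=b_iE_i$. The cover actually produced in the paper (following Kawamata's construction) is branched in addition along auxiliary very ample divisors $H^{(i)}_k$ introduced to make the cover globally defined and smooth, so the Hurwitz computation acquires extra terms and your simplified model does not literally describe $\tau$. The version of the theorem stated in the body of the paper (Theorem \ref{3.1}(ii)) already records the isomorphism $\OO_X(K_X+\ulcorner D\urcorner)\cong(\tau_*\OO_Y(K_Y+\tau^*D))^G$ and the resulting direct-summand injection $H^i(X,K_X+\ulcorner D\urcorner)\inj H^i(Y,K_Y+\tau^*D)$, so the descent you flag as the ``main obstacle'' should simply be quoted from there rather than reproved with the wrong ramification model. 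With the perturbation step added and the descent taken from Theorem \ref{3.1}(ii), your proof coincides with the paper's.
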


Secondly, we generalize almost all of the results in \cite{xie10,xie11} to the case where everything is considered over
$W(k)$, the ring of Witt vectors of $k$. A smooth scheme $X$ is said to be strongly liftable over $W(k)$, if $X$ and all
prime divisors on $X$ can be lifted simultaneously over $W(k)$. The following is the main result in \S \ref{S5}.

\begin{thm}\label{1.3}
The following varieties are strongly liftable over $W(k)$:
\begin{itemize}
\item[(i)] $\A^n_k$, $\PP^n_k$ and a smooth projective curve;
\item[(ii)] a smooth projective variety of Picard number 1 which is a complete intersection in $\PP^n_k$;
\item[(iii)] A smooth projective rational surface;
\item[(iv)] A smooth projective toric variety.
\end{itemize}
\end{thm}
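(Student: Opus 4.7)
The plan is to parallel the arguments for $W_2(k)$-liftability from \cite{xie10,xie11}, replacing each first-order lift by a compatible family of lifts over the tower $\{W_n(k)\}$, or equivalently a single lift over $W(k)$. The recurring tool is that if $\mathfrak{X}/W(k)$ is a smooth lift of $X$ and $D\subset X$ is an effective Cartier divisor with $\OO_X(D)=L$, then $D$ lifts to a divisor on $\mathfrak{X}$ iff both $L$ and its defining section lift; so in each of the four cases I want to exhibit a lift of $X$ together with lifts of $\OO_X(D)$ and its defining section for every prime divisor $D$. Obstruction-theoretically, the first job is controlled by $H^2$ of the tangent/structure sheaf and the section job by $H^1(X,L)$ at each Witt level.

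For (i), the canonical lifts $\A^n_{W(k)}$ and $\PP^n_{W(k)}$ work, and a prime divisor $V(f)$ lifts by lifting the coefficients of $f$ to $W(k)$ arbitrarily. For a smooth projective curve $C$ the obstruction space $H^2(C,T_C)$ vanishes for dimensional reasons, so a $W(k)$-lift $\mathfrak{C}$ exists, and any closed point of $C$ extends to a $W(k)$-section of $\mathfrak{C}\to \Spec W(k)$ by smoothness. For (ii), lift the defining polynomials of $X\subset \PP^n_k$ to polynomials over $W(k)$ to obtain $\mathfrak{X}\subset \PP^n_{W(k)}$; given any prime divisor $D$, Picard number one forces $D\sim dH$ for some $d\ge 1$, where $H$ is the hyperplane class, and the corresponding section of $\OO_X(d)$ lifts to $\mathfrak{X}$ as soon as $H^0(\mathfrak{X},\OO(d))\twoheadrightarrow H^0(X,\OO(d))$ is surjective at each Witt level, which is automatic from the Lefschetz-type vanishing $H^1(X,\OO_X(d))=0$ for complete intersections of dimension $\ge 2$.

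For (iii), use the classical description of a smooth projective rational surface as an iterated point-blow-up of $\PP^2$ or a Hirzebruch surface $\F_n$. The base cases come from (i) and (iv), so it remains to prove, in analogy with \cite{xie10}, that strong $W(k)$-liftability is preserved under blowing up at a closed point: lift the point to a $W(k)$-section of the existing lift $\mathfrak{X}$, blow up, then verify that the new exceptional divisor and the strict transforms of previously lifted prime divisors supply lifts of all prime divisors on the blow-up. For (iv), a smooth projective toric variety admits a canonical fan-theoretic lift $\mathfrak{X}_{W(k)}$ over $W(k)$, and each torus-invariant prime divisor has a canonical lift as an invariant divisor on $\mathfrak{X}_{W(k)}$; a non-invariant prime divisor is handled by writing its class as a $\Z$-combination of invariant divisors, lifting the associated line bundle canonically from the toric data, and lifting the defining section via $H^1$-vanishing for the relevant globally generated line bundles on smooth projective toric varieties.

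The main obstacle I foresee is in part (ii): one must run the induction on Witt levels cleanly on the level of sections, keeping track of the fact that $D$ is only linearly equivalent to $dH$ rather than literally cut out by a degree-$d$ hypersurface, so the section to be lifted sits in a possibly proper subspace of $H^0(\PP^n_k,\OO(d))$. Parts (i), (iii) and (iv) amount to bookkeeping around explicit models over $W(k)$ (or even $\Z$), but in (ii) the cohomological surjectivity at each Witt stage, together with the need to keep the lifts of different prime divisors simultaneously compatible with a single lift $\mathfrak{X}$, is the decisive input.
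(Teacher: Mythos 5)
Your proposal is correct and follows essentially the same route as the paper: explicit models over $W(k)$ for $\A^n$, $\PP^n$, complete intersections and toric varieties combined with the criterion that surjectivity of $r:H^0(\wt{X},\wt{\LL}_D)\ra H^0(X,\LL_D)$ yields strong liftability (Proposition \ref{5.9}), unobstructedness plus lifting of closed points for curves (Theorem \ref{5.7} and Lemma \ref{5.5}), and induction on point blow-ups from $\PP^2$ or a Hirzebruch surface for rational surfaces (Proposition \ref{5.6} with Theorem \ref{5.10} as base case). The paper itself delegates most details to \cite{xie10,xie11}, and your reconstruction matches the structure of those arguments, including the cohomological vanishing inputs you identify as the decisive steps.
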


Finally, by means of cyclic covers over toric varieties, we can obtain a large class of smooth projective 
varieties liftable over $W(k)$ (see Theorem \ref{5.12} and Corollary \ref{5.13} for more details).

\begin{cor}\label{1.4}
Let $X$ be a smooth projective toric variety, and $\LL$ an invertible sheaf on $X$.
Let $N$ be a positive integer prime to $p$, and $D$ an effective divisor on $X$ with
$\LL^N=\OO_X(D)$ and $\Sing(D_{\rm red})=\emptyset$.
Let $\pi:Y\ra X$ be the cyclic cover obtained by taking the $N$-th root out of $D$.
Then $Y$ is a smooth projective scheme liftable over $W(k)$.
\end{cor}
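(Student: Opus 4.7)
The plan is to lift every ingredient used to construct $\pi:Y\ra X$ to $W(k)$ and then to perform the same construction relatively over $\Spec W(k)$.

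By Theorem \ref{1.3}(iv), $X$ is strongly liftable to a smooth projective $W(k)$-scheme $\wt{X}$, together with a simultaneous lift of every prime divisor on $X$ to a prime Cartier divisor on $\wt{X}$. Writing $D=\sum_im_iD_i$ as a positive combination of prime divisors, I set $\wt{D}=\sum_im_i\wt{D}_i$. For a smooth projective toric variety one has $H^j(X,\OO_X)=0$ for all $j>0$, and the vanishing of $H^1(X,\OO_X)$ and $H^2(X,\OO_X)$ forces the specialization map $\Pic(\wt{X})\ra\Pic(X)$ to be an isomorphism at each infinitesimal level, and hence in the limit. Consequently $\LL$ lifts uniquely to some $\wt{\LL}\in\Pic(\wt{X})$, and the identification $\LL^N\cong\OO_X(D)$ promotes uniquely to $\wt{\LL}^N\cong\OO_{\wt{X}}(\wt{D})$; the canonical section of $\OO_{\wt{X}}(\wt{D})$ then lifts the section $s\in H^0(X,\LL^N)$ defining $D$. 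The relative cyclic cover
\[
\wt{\pi}:\wt{Y}:=\Spec_{\wt{X}}\Bigl(\bigoplus_{i=0}^{N-1}\wt{\LL}^{-i}\Bigr)\ra\wt{X},
\]
with algebra structure determined by the lifted section, is finite locally free over $\wt{X}$, hence flat over $W(k)$, and its closed fibre is exactly $\pi:Y\ra X$.

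The main obstacle is to check that $\wt{Y}$ is actually smooth over $W(k)$ and not merely flat. By flatness it suffices to verify smoothness on each fibre. The closed fibre is $Y$, smooth by hypothesis. For the generic fibre, the condition $\Sing(D_{\rm red})=\emptyset$ forces the components of $D_{\rm red}$ to be pairwise disjoint smooth divisors on $X$; their lifts $\wt{D}_i$ are flat $W(k)$-schemes with smooth closed fibre and are therefore smooth over $W(k)$, and any two of them are disjoint because their scheme-theoretic intersection is proper over $W(k)$ with empty closed fibre and hence empty. Thus $\wt{D}_{\rm red}$ is smooth over $W(k)$, and in particular the generic fibre $\wt{D}_{K,{\rm red}}$ is smooth over the fraction field $K$ of $W(k)$. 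Since $p\nmid N$, $N$ is a unit in $W(k)$, so the same local analysis that produces smoothness of $Y$ from smoothness of $D_{\rm red}$ and invertibility of $N$ in $k$ also produces smoothness of $\wt{Y}_K$ from the corresponding data on the generic fibre. This completes the construction of a smooth projective lift of $Y$ over $W(k)$.
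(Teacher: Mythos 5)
Your overall route is the one the paper takes (via Theorem \ref{5.12} and its proof, deferred to \cite{xie11}): use strong liftability of the toric variety $X$ over $W(k)$ to lift each component $D_i$ of $D$, use $H^1(X,\OO_X)=H^2(X,\OO_X)=0$ to lift $\LL$ uniquely and to promote $\LL^N\cong\OO_X(D)$ to $\wt{\LL}^N\cong\OO_{\wt{X}}(\wt{D})$, and then form the relative cyclic cover over $W(k)$ and check smoothness fibre by fibre. (In the step ``and hence in the limit'' you are implicitly invoking the Grothendieck existence theorem to algebraize the compatible system of line bundles on the truncations $X_n$; this is legitimate because $\wt{X}$ is projective over the complete local ring $W(k)$, but it should be said.)

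There is, however, one genuine gap. Your relative cover $\Spec_{\wt{X}}\bigl(\bigoplus_{i=0}^{N-1}\wt{\LL}^{-i}\bigr)$ is a lifting of $Y$ only when $D$ is reduced, whereas the corollary assumes only $\Sing(D_{\rm red})=\emptyset$, so $D=\sum_j m_jD_j$ may have multiplicities $m_j\geq 2$. The cyclic cover ``obtained by taking the $N$-th root out of $D$'' is, as in \cite[Chapter 3]{ev} (the convention used in Lemma \ref{3.5} of this paper), the \emph{normalization} of $\Spec_X\bigl(\bigoplus_{i=0}^{N-1}\LL^{-i}\bigr)$, i.e.\ $\Spec_X\bigl(\bigoplus_{i=0}^{N-1}\LL^{(i)-1}\bigr)$ with $\LL^{(i)}=\LL^{i}(-[iD/N])$. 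The unnormalized algebra gives, locally along a component of multiplicity $m\geq 2$, the hypersurface $z^N=t^m u$ with $u$ a unit, which is singular (indeed non-normal) along $\{z=t=0\}$; so the closed fibre of your $\wt{Y}$ is neither $Y$ nor smooth, and the assertion ``its closed fibre is exactly $\pi:Y\ra X$'' fails. The repair is standard and stays entirely within your framework: since each $D_j$ lifts to $\wt{D}_j$, the twists $[i\wt{D}/N]=\sum_j[im_j/N]\wt{D}_j$ are defined over $W(k)$, so set $\wt{\LL}^{(i)}=\wt{\LL}^{i}(-[i\wt{D}/N])$ and take $\wt{Y}=\Spec_{\wt{X}}\bigl(\bigoplus_{i=0}^{N-1}\wt{\LL}^{(i)-1}\bigr)$. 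With this modification the closed fibre is $Y$, and your argument for the generic fibre (smoothness and pairwise disjointness of the $\wt{D}_j$ over $W(k)$, together with invertibility of $N$) goes through as written.
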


In \S \ref{S2}, we will recall some definitions and preliminary results of liftings over $W_2(k)$.
The Kummer covering trick over $W_2(k)$ will be treated in \S \ref{S3}.
We will give the generalizations to $W(k)$ in \S \ref{S5}.
For the necessary notions and results in birational geometry, we refer the reader to \cite{kmm}.

\begin{nota}
We use $[B]=\sum [b_i] B_i$ (resp.\ $\ulcorner B\urcorner=\sum \ulcorner b_i\urcorner B_i$,
$\langle B\rangle=\sum \langle b_i\rangle B_i$) to denote the round-down (resp.\ round-up,
fractional part) of a $\Q$-divisor $B=\sum b_iB_i$, where for a real number $b$,
$[b]:=\max\{ n\in\Z \,|\,n\leq b \}$, $\ulcorner b\urcorner:=-[-b]$ and $\langle b\rangle:=b-[b]$.
We use $\Sing(D_{\rm red})$ (resp.\ $\Supp(D)$) to denote the singular locus of the
reduced part (resp.\ the support) of a divisor $D$. We use $K(X)$ to denote the field of rational
functions of an integral scheme $X$.
\end{nota}

\begin{ack}
We are deeply indebted to Professor Luc Illusie for pointing out a mistake in an earlier version of
this paper and providing a new proof of Theorem \ref{3.1} by means of the logarithmic techniques.
We would also like to thank the referee for giving many useful comments, which make this paper more 
readable.
\end{ack}

\section{Preliminaries}\label{S2}

\begin{defn}\label{2.1}
Let $W_2(k)$ be the ring of Witt vectors of length two of $k$.
Then $W_2(k)$ is flat over $\Z/p^2\Z$, and $W_2(k)\otimes_{\Z/p^2\Z}\F_p=k$.
The following definition \cite[Definition 8.11]{ev} generalizes the definition
\cite[1.6]{di} of liftings of $k$-schemes over $W_2(k)$.

Let $X$ be a noetherian scheme over $k$, and $D=\sum D_i$ a reduced Cartier
divisor on $X$. A lifting of $(X,D)$ over $W_2(k)$ consists of a scheme
$\wt{X}$ and closed subschemes $\wt{D}_i\subset\wt{X}$, all defined and
flat over $W_2(k)$ such that $X=\wt{X}\times_{\spec W_2(k)}\spec k$ and
$D_i=\wt{D}_i\times_{\spec W_2(k)}\spec k$. We write
$\wt{D}=\sum \wt{D}_i$ and say that $(\wt{X},\wt{D})$ is a lifting
of $(X,D)$ over $W_2(k)$, if no confusion is likely.

Let $\LL$ be an invertible sheaf on $X$. A lifting of $(X,\LL)$ consists
of a lifting $\wt{X}$ of $X$ over $W_2(k)$ and an invertible sheaf $\wt{\LL}$
on $\wt{X}$ such that $\wt{\LL}|_X=\LL$. For simplicity, we say that
$\wt{\LL}$ is a lifting of $\LL$ on $\wt{X}$, if no confusion is likely.
\end{defn}

Let $\wt{X}$ be a lifting of $X$ over $W_2(k)$. Then $\OO_{\wt{X}}$ is flat
over $W_2(k)$, hence flat over $\Z/p^2\Z$. Note that there is an exact
sequence of $\Z/p^2\Z$-modules:
\[
0\ra p\cdot\Z/p^2\Z\ra \Z/p^2\Z\stackrel{r}{\ra} \Z/p\Z\ra 0,
\]
and a $\Z/p^2\Z$-module isomorphism $p:\Z/p\Z\ra p\cdot\Z/p^2\Z$.
Tensoring the above by $\OO_{\wt{X}}$, we obtain an exact sequence of
$\OO_{\wt{X}}$-modules:
\begin{eqnarray}
0\ra p\cdot\OO_{\wt{X}}\ra \OO_{\wt{X}}\stackrel{r}{\ra}
\OO_X\ra 0, \label{es1}
\end{eqnarray}
and an $\OO_{\wt{X}}$-module isomorphism
\begin{eqnarray}
p:\OO_X\ra p\cdot\OO_{\wt{X}}, \label{es2}
\end{eqnarray}
where $r$ is the reduction modulo $p$ satisfying $p(x)=p\wt{x}$,
$r(\wt{x})=x$ for $x\in\OO_X$, $\wt{x}\in\OO_{\wt{X}}$.

\begin{defn}\label{2.2}
Let $X$ be a smooth scheme over $k$. $X$ is said to be strongly liftable
over $W_2(k)$, if there is a lifting $\wt{X}$ of $X$ over $W_2(k)$, such that
for any prime divisor $D$ on $X$, $(X,D)$ has a lifting $(\wt{X},\wt{D})$
over $W_2(k)$ as in Definition \ref{2.1}, where $\wt{X}$ is fixed for all
liftings $\wt{D}$.
\end{defn}

Let $X$ be a smooth scheme over $k$, $\wt{X}$ a lifting of $X$ over $W_2(k)$,
$D$ a prime divisor on $X$ and $\LL_D=\OO_X(D)$ the associated invertible
sheaf on $X$. Then there is an exact sequence of abelian sheaves:
\begin{eqnarray}
0\ra \OO_X\stackrel{q}{\ra} \OO^*_{\wt{X}}\stackrel{r}{\ra}\OO^*_X\ra 1,
\label{es3}
\end{eqnarray}
where $q(x)=p(x)+1$ for $x\in\OO_X$, $p:\OO_X\ra p\cdot\OO_{\wt{X}}$ is
the isomorphism (\ref{es2}) and $r$ is the reduction modulo $p$. The exact
sequence (\ref{es3}) gives rise to an exact sequence of cohomology groups:
\begin{eqnarray}
H^1(\wt{X},\OO^*_{\wt{X}})\stackrel{r}{\ra} H^1(X,\OO^*_X)\ra
H^2(X,\OO_X). \label{es4}
\end{eqnarray}
If $r:H^1(\wt{X},\OO^*_{\wt{X}})\ra H^1(X,\OO^*_X)$ is surjective,
then $\LL_D$ has a lifting $\wt{\LL}_D$. We combine (\ref{es1}) and (\ref{es2})
to obtain an exact sequence of $\OO_{\wt{X}}$-modules:
\begin{eqnarray}
0\ra \OO_{X}\stackrel{p}{\ra} \OO_{\wt{X}}\stackrel{r}{\ra}
\OO_X\ra 0. \label{es5}
\end{eqnarray}
Tensoring (\ref{es5}) by $\wt{\LL}_D$, we have an exact sequence of
$\OO_{\wt{X}}$-modules:
\begin{eqnarray*}
0\ra \LL_D\stackrel{p}{\ra} \wt{\LL}_D\stackrel{r}{\ra} \LL_D\ra 0,
\end{eqnarray*}
which gives rise to an exact sequence of cohomology groups:
\begin{eqnarray}
H^0(\wt{X},\wt{\LL}_D)\stackrel{r}{\ra} H^0(X,\LL_D)\ra H^1(X,\LL_D).
\label{es6}
\end{eqnarray}

There is a criterion for strong liftability over $W_2(k)$ \cite[Proposition 2.5]{xie11}.

\begin{prop}\label{2.3}
Let $X$ be a smooth scheme over $k$, and $\wt{X}$ a lifting of $X$ over
$W_2(k)$. If for any prime divisor $D$ on $X$, there is a lifting $\wt{\LL}_D$
of $\LL_D=\OO_X(D)$ on $\wt{X}$ such that the natural map $r:H^0(\wt{X},\wt{\LL}_D)\ra
H^0(X,\LL_D)$ is surjective, then $X$ is strongly liftable over $W_2(k)$.
\end{prop}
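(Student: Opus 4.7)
The plan is to construct, for each prime divisor $D$ on $X$, a flat lift $\wt{D} \subset \wt{X}$ by lifting the canonical defining section of $D$ and exploiting the hypothesized surjectivity of $r$. Since $D$ is an effective Cartier divisor on the smooth scheme $X$, there is a canonical section $s_D \in H^0(X, \LL_D)$ whose zero subscheme is $D$. By surjectivity of $r$, one can choose $\wt{s}_D \in H^0(\wt{X}, \wt{\LL}_D)$ with $r(\wt{s}_D) = s_D$; I would then define $\wt{D}$ to be the zero subscheme of $\wt{s}_D$ in $\wt{X}$.

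Two verifications remain. The equality $\wt{D} \times_{\spec W_2(k)} \spec k = D$ is immediate from $r(\wt{s}_D) = s_D$, since reducing the ideal sheaf of $\wt{D}$ modulo $p$ yields the ideal sheaf defining $D$. The real content is the flatness of $\wt{D}$ over $W_2(k)$, which is a local question. Picking an affine open $\spec \wt{A} \subset \wt{X}$ over which $\wt{\LL}_D$ is trivial, the section $\wt{s}_D$ becomes a regular function $\wt{f} \in \wt{A}$ whose reduction $f \in A := \wt{A}/p\wt{A}$ is a local equation for $D$. Since $X$ is smooth and $D$ is an effective Cartier divisor, $f$ is a non-zero-divisor in $A$, which is the key ingredient that will feed the flatness check.

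To show that $\wt{A}/(\wt{f})$ is $W_2(k)$-flat I would use the fact that, because $p^2 = 0$ in $W_2(k)$, flatness over $W_2(k)$ is equivalent to $\kernel(p\cdot) = p\cdot\wt{A}/(\wt{f})$. Given $\wt{a}, \wt{b} \in \wt{A}$ with $p\wt{a} = \wt{f}\wt{b}$, reducing modulo $p$ yields $f\bar{b} = 0$ in $A$, whence $\bar{b} = 0$; via the isomorphism (\ref{es2}) I can write $\wt{b} = p\wt{b}'$. Substituting back, $p(\wt{a} - \wt{f}\wt{b}') = 0$ in $\wt{A}$, and flatness of $\wt{A}$ over $W_2(k)$ then forces $\wt{a} - \wt{f}\wt{b}' \in p\wt{A}$, giving $\wt{a} \in (\wt{f}) + p\wt{A}$, as required.

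The main potential obstacle is this flatness verification, but as sketched it reduces cleanly to the non-zero-divisor property of $f$, which is automatic in the smooth setting. Because the same lift $\wt{X}$ serves every prime divisor $D$ simultaneously (only $\wt{\LL}_D$ and $\wt{s}_D$ vary with $D$), the conditions of Definition \ref{2.2} will be met and $X$ will be strongly liftable over $W_2(k)$.
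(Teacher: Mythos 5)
Your proposal is correct and follows essentially the same route as the paper: for this statement the paper simply cites \cite[Proposition 2.5]{xie11}, but its proof of the $W(k)$-analogue (Proposition \ref{5.9}) is exactly your construction --- lift the defining section $s_D$ through the surjection $r$ and declare $\wt{D}$ to be the zero scheme of the lifted section, then check that $\wt{D}$ restricts to $D$ and is flat. The only divergence is in how flatness is verified: the paper reduces it to $\tor_1^{W(k)}(\OO_{\wt{D}},k)=0$ via the local criterion of flatness and the exact sequence (\ref{es18}), whereas you check directly that $\kernel(p\,\cdot)=p\cdot\wt{A}/(\wt{f})$ using that $f$ is a non-zero-divisor on the smooth scheme $X$; both arguments turn on the same point, and your element-level computation is a perfectly valid, slightly more self-contained substitute in the length-two case.
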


\section{Kummer covering trick over $W_2(k)$}\label{S3}

The Kummer covering trick \cite[Theorem 17]{ka81}, due to Kawamata, is a powerful technique
in birational geometry of algebraic varieties. The following theorem, i.e.\ the Kummer covering
trick over $W_2(k)$, is the main result in this section.

\begin{thm}\label{3.1}
Let $X$ be a smooth projective variety strongly liftable over $W_2(k)$, $D$ a $\Q$-divisor on $X$
such that the fractional part $\langle D\rangle=\sum_{i\in I}\displaystyle{\frac{a_i}{b_i}}D_i$
satisfies that $0<a_i<b_i$, $(a_i,b_i)=1$ and $p\nmid b_i$ for all $i\in I$, and $\sum_{i\in I}D_i$
is simple normal crossing. Then there exists a finite Galois morphism $\tau: Y\ra X$ from a smooth
projective variety $Y$ with Galois group $G=\Gal(K(Y)/K(X))$ which satisfies the following conditions:
\begin{itemize}
\item[(i)] $\tau^*D$ becomes an integral divisor on $Y$;
\item[(ii)] $\OO_X([D])\cong (\tau_*\OO_Y(\tau^*D))^G$, $\OO_X(K_X+\ulcorner D\urcorner)\cong (\tau_*\OO_Y(K_Y+\tau^*D))^G$,
where $G$ acts naturally on $\tau_*\OO_Y(\tau^*D)$ and on $\tau_*\OO_Y(K_Y+\tau^*D)$. Via these isomorphisms, $\OO_X([D])$
(resp.\ $\OO_X(K_X+\ulcorner D\urcorner)$) turns out to be a direct summand of $\tau_*\OO_Y(\tau^*D)$ (resp.\ $\tau_*\OO_Y(K_Y+\tau^*D)$); and
\item[(iii)] $Y$ is liftable over $W_2(k)$.
\end{itemize}
\end{thm}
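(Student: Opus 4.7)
The plan is to apply Kawamata's classical Kummer covering construction to produce $\tau:Y\to X$ and verify (i) and (ii) by the usual direct summand argument, and then to lift $Y$ to $W_2(k)$ by exploiting the logarithmic smoothness of an auxiliary SNC pair $(\wt X,\wt B)$, following the techniques of Fujiwara--Kato--Nakayama indicated in the Acknowledgments.

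For the construction, I would use Bertini together with the strong liftability of $X$ to choose, for each $i\in I$, a smooth prime divisor $H_i$ on $X$ such that $\LL_i:=\OO_X(H_i)$ and $\LL_i^{b_i}\otimes\OO_X(-D_i)$ are very ample, and then a smooth prime $E_i\in|\LL_i^{b_i}\otimes\OO_X(-D_i)|$ arranged so that the reduced divisor $B:=\sum_{i\in I}(D_i+H_i+E_i)$ is simple normal crossing. Letting $s_i\in H^0(X,\LL_i^{b_i})$ be a section with zero divisor $D_i+E_i$, the cyclic cover $\pi_i:Y_i\to X$ of degree $b_i$ associated to $(\LL_i,s_i)$ is Galois with group $\Z/b_i\Z$; since $p\nmid b_i$ and $B$ is SNC, the normalization $Y$ of $\prod_X Y_i$ is smooth, $\tau=\prod\pi_i:Y\to X$ is Galois with group $G=\prod_i\Z/b_i\Z$, and the ramification index of $\tau$ along each component above $D_i$ is exactly $b_i$, so $\tau^*D$ is integral, yielding (i). Item (ii) then follows from the standard eigensheaf decomposition of $\tau_*\OO_Y$ under the $G$-action, which identifies $\OO_X([D])$ (resp.\ $\OO_X(K_X+\ulcorner D\urcorner)$) with the $G$-invariant direct summand of $\tau_*\OO_Y(\tau^*D)$ (resp.\ $\tau_*\OO_Y(K_Y+\tau^*D)$).

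The main obstacle is (iii), the liftability of $Y$. For this I would equip $\wt X$ with the fine saturated log structure $\MM_{\wt X}$ defined by the divisor $\wt B:=\sum_{i\in I}(\wt D_i+\wt H_i+\wt E_i)$, where each component is the $W_2(k)$-lifting of the corresponding smooth prime divisor furnished by strong liftability. Flatness of each lifted component over $W_2(k)$ together with transversality modulo $p$ forces $\wt B$ to be simple normal crossing on $\wt X$, making $(\wt X,\MM_{\wt X})$ log smooth over $W_2(k)$ (with the trivial log structure). The Galois cover $\tau:Y\to X$ is Kummer log \'etale of order prime to $p$ with respect to $\MM_X$, and the log deformation theory of Fujiwara--Kato--Nakayama provides an essentially unique Kummer log \'etale lift $\wt\tau:(\wt Y,\MM_{\wt Y})\to(\wt X,\MM_{\wt X})$; since Kummer log \'etale morphisms are flat on underlying schemes, $\wt Y$ is flat over $W_2(k)$ with $\wt Y\otimes_{W_2(k)}k=Y$, the desired lifting. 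The hard point is this last step: one must both set up the fine saturated log structure correctly (verifying log smoothness of $(\wt X,\MM_{\wt X})$ via local coordinate charts compatible with the liftings $\wt D_i$, $\wt H_i$, $\wt E_i$) and invoke the equivalence of Kummer log \'etale sites along the exact closed immersion $(X,\MM_X)\hookrightarrow(\wt X,\MM_{\wt X})$ in order to produce a genuine flat scheme-theoretic lifting of $Y$ rather than merely of the log structure.
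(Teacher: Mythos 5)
Your outline for part (iii) is the right one and essentially matches the paper's argument, but there is a genuine gap in your construction of the cover $Y$: as you build it, $Y$ is not smooth. The cyclic cover $\pi_i:Y_i\ra X$ of degree $b_i$ attached to a section $s_i$ with $\divisor(s_i)=D_i+E_i$ has local model $z^{b_i}=tu$ at any point of $D_i\cap E_i$ (where $t,u$ are local equations of $D_i,E_i$); this is a normal hypersurface with an $A_{b_i-1}$-singularity along $\{z=t=u=0\}$, so normalization does not remove it, and passing to the normalized fibre product $\prod_X Y_i$ changes nothing near such a point since the other $\pi_j$ are \'etale there. Moreover $D_i\cap E_i\neq\emptyset$ is unavoidable, because $E_i$ moves in a very ample system and $D_i$ has positive dimension (for $\dim X\geq 2$). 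So the assertion ``the normalization $Y$ of $\prod_X Y_i$ is smooth'' is false, which breaks the statement of the theorem (and the intended application to Kodaira vanishing via Deligne--Illusie, which needs $Y$ smooth). This is exactly the point the Kawamata covering avoids, and why the paper's construction is more elaborate: one fixes a single $m$ with $m\langle D\rangle$ integral and $p\nmid m$, and for each $i$ takes $d=\dim X$ \emph{general} members $H^{(i)}_k\in|mM-D_i|$, $1\leq k\leq d$, with $\bigcap_k H^{(i)}_k=\emptyset$, adjoining the $m$-th roots of local equations $\varphi^{(i)}_k$ of $D_i+H^{(i)}_k$ for \emph{all} $k$. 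Then at every point $x$ there is some $k_i$ with $x\notin H^{(i)}_{k_i}$, so $\varphi^{(i)}_{k_i}$ is a unit times the single coordinate $t_i$, and the remaining generators are, up to the roots already adjoined, roots of single coordinates $s_{i,k}$ or of units (the sets $R_x$ and $T_x$ in the paper); one never extracts a root of a product of two parameters, and $Y$ is smooth by the argument of \cite[Lemma 1-1-2]{kmm}. Your single auxiliary divisor $E_i$ per index cannot achieve this.

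Once the cover is constructed correctly, the rest of your plan is sound and is the paper's route: (i) and (ii) follow as in \cite[Theorem 1-1-1]{kmm}, and for (iii) one puts on $\wt{X}$ the log structure determined by the $W_2(k)$-liftings of all the branch components (available by strong liftability), checks that $\tau$ is a finite Kummer \'etale cover for the log structure defined by $\Supp\langle D\rangle\cup\Supp(\Sigma_{i,k}H^{(i)}_k)$ (Lemma \ref{3.7}), and lifts it uniquely across the exact square-zero immersion by log \'etale deformation theory (Theorem \ref{3.3}, Lemma \ref{3.4}); log smoothness of $\wt{Y}$ over $W_2(k)$ then gives flatness. Note that this machinery would also produce a flat lifting of your singular $Y$ (it is still Kummer \'etale, hence log regular), but that only yields a lifting of a non-smooth scheme, which is not what the theorem asserts.
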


We shall prove the Kummer covering trick and reprove the cyclic cover trick over $W_2(k)$
by means of the logarithmic techniques developed by K. Fujiwara, K. Kato and C. Nakayama.
For the necessary definitions and results for the logarithmic theory, e.g.\ log structure, fs log scheme,
log \'etale morphism, Kummer \'etale cover, log regular and so on, we refer the reader to \cite{kato89,kato94},
\cite{na97} and \cite{il02}. It should be mentioned that almost all of the arguments in this section are
essentially due to L. Illusie.

First of all, we recall \cite[Theorem 7.6]{il02} as follows.

\begin{thm}\label{3.2}
Let $X$ be a noetherian, log regular fs log scheme, with open subset of triviality of the log structure $U$,
and let $D=X-U$. Let $\mathrm{Kcov}(X)$ denote the category of finite Kummer \'etale covers of $X$, and
$\mathrm{Etcovtame}(U)$ the category of classical finite \'etale covers of $U$ which are tamely ramified
along $D$. Then the restriction functor
\begin{eqnarray}
\mathrm{Kcov}(X)\ra \mathrm{Etcovtame}(U) \label{es7}
\end{eqnarray}
is an equivalence of categories.
\end{thm}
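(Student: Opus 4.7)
The plan is to prove Theorem \ref{3.2} by establishing fully faithfulness and essential surjectivity of the restriction functor separately, using the local structure of log regular fs log schemes together with a logarithmic version of Abhyankar's lemma.

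For fully faithfulness, I would first note that log regularity implies that $X$ is normal and that the same holds for any Kummer \'etale cover $Y\to X$ (since Kummer \'etale covers of log regular fs log schemes are again log regular, hence normal). Because $U\subset X$ is the open locus of triviality of the log structure and contains all generic points (it is schematically dense as $X$ is normal and $D=X-U$ is of codimension $\geq 1$), a morphism $f_U:Y_1|_U\to Y_2|_U$ over $U$ between the underlying classical restrictions uniquely determines a rational map $Y_1\dra Y_2$ over $X$. Normality of $Y_2$ together with the fact that $Y_1\to X$ is finite (hence $Y_1$ is finite over the normal $X$) upgrades this to a morphism of schemes $f:Y_1\to Y_2$. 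One then checks that $f$ is automatically compatible with the log structures: the log structures on $Y_i$ are those pulled back from $X$ in the Kummer sense, and the compatibility of charts can be verified on the open $U$ where both log structures are trivial, then extended by normality of the monoid sheaves.

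For essential surjectivity, the key input is the local structure theorem for log regular fs log schemes (Kato's theorem): \'etale locally, $X$ admits a chart modeled on $\Spec\Z[P]$ for a sharp fs monoid $P$, and the log structure is the toric one. Given a finite \'etale cover $V\to U$ tamely ramified along $D$, I would work locally at a geometric point $\bar x\to X$ with such a chart and reduce to analyzing finite \'etale covers of $U_{\bar x}:=(X_{\bar x})\setminus D$ that are tame along the toric boundary. A logarithmic form of Abhyankar's lemma, proved by iterating adjunction of $n$-th roots of the monoid generators (for $n$ invertible on $X$), shows that after a Kummer \'etale base change $X'\to X$ obtained by pulling back along $P\to \frac{1}{n}P$, the cover $V\times_U U'$ becomes unramified over $X'$, hence extends to an \'etale cover $Y'\to X'$. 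Descending $Y'$ along the Galois Kummer \'etale cover $X'\to X$ produces a Kummer \'etale cover $Y\to X$ whose restriction to $U$ recovers $V\to U$.

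The main obstacle I expect is the descent step in essential surjectivity: one must show that these local Kummer \'etale extensions, constructed chart by chart, glue into a single global Kummer \'etale cover. The right framework is to verify that the presheaf on the Kummer \'etale site of $X$ sending $X'\mapsto \mathrm{Etcov}(X'\times_X U)$ (with appropriate tameness) is actually a sheaf, and that local trivializations match via the cocycle produced by descent data on $U$. This reduces to two technical points: (a) faithfully flat descent for finite morphisms in the Kummer \'etale topology, which follows since Kummer \'etale covers are faithfully flat and finite; and (b) the uniqueness half of fully faithfulness applied on overlaps, which guarantees the cocycle condition. Once these are in place, essential surjectivity follows, and combined with fully faithfulness one obtains the claimed equivalence of categories.
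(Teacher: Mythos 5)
The paper does not actually prove Theorem \ref{3.2}: it is recalled verbatim from Illusie's overview \cite[Theorem 7.6]{il02} (a result of Fujiwara--Kato), and the authors immediately restrict to the special case where $X$ is a regular scheme with log structure given by a simple normal crossing divisor, in which case they observe the statement reduces to the classical lemma of Abhyankar, with the quasi-inverse to (\ref{es7}) given by sending a tame cover $V$ of $U$ to the normalization of $X$ in $V$. So you are supplying a proof where the paper supplies a citation; judged on its own terms, your outline follows the standard strategy for the general statement, and the full-faithfulness half (extend a morphism over $U$ across $D$ by taking the closure of its graph and using that the source is finite over $X$ and normal, then check log-compatibility on the dense open of triviality) is sound.

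The one step that is genuinely glossed over incorrectly is your justification of the descent in essential surjectivity: you assert faithfully flat descent ``since Kummer \'etale covers are faithfully flat and finite.'' Finite Kummer \'etale morphisms are in general \emph{not} flat on underlying schemes (they are log flat, but the standard Kummer cover $\spec\Z[Q]\ra\spec\Z[P]$ of a non-regular log regular base already fails flatness; flatness only comes for free over a regular $X$ by miracle flatness, since log regular schemes are Cohen--Macaulay). Consequently, effectivity of descent for finite objects in the Kummer \'etale topology is a nontrivial theorem of Kato, not an instance of fppf descent, and your step (a) needs that input rather than the stated reason. A cleaner route, which sidesteps descent entirely and is exactly the quasi-inverse the paper records in its special case, is to define $Y$ globally as the normalization of $X$ in $V$, equip it with the log structure $M_Y=\OO_Y\cap j_*\OO_V^{\times}$, and verify Kummer \'etaleness \'etale-locally on $X$ using Kato's local structure theorem together with the logarithmic Abhyankar lemma; the gluing problem you worry about then disappears because normalization is already a global construction.
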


In this section, we always focus on a particular case where $X$ is a regular scheme, whose log structure
is defined by a simple normal crossing divisor $D=\sum_{i\in I}D_i$, then Theorem \ref{3.2} is a simple
application of a lemma of Abhyankar. In this case, a quasi-inverse to (\ref{es7}) is obtained by associating
to a classical finite \'etale cover $V$ of $U$, tamely ramified along $D$, the normalization $Y$ of $X$ in $V$.

\begin{thm}\label{3.3}
Let $X\subset \wt{X}$ be an exact closed immersion of fs log schemes, given by an ideal sheaf $\II$ of square
zero. Let $f:Y\ra X$ be a log \'etale morphism of fs log schemes. Then there exists a log \'etale morphism
$\wt{f}:\wt{Y}\ra \wt{X}$, which is a lifting of $f:Y\ra X$ and is unique up to a unique isomorphism.
\end{thm}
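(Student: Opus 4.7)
The plan is to adapt to the logarithmic setting the classical infinitesimal lifting theorem for \'etale morphisms (SGA 1 IV.18). The strategy splits the problem into a rigidity statement (any two log \'etale liftings admit a unique isomorphism restricting to the identity on $Y$) and a local existence statement (liftings exist \'etale-locally on $\wt{X}$). Once both are in hand, the uniqueness of local isomorphisms automatically ensures that the cocycle condition on triple overlaps holds, so the local liftings glue to a global $\wt{Y}\ra\wt{X}$ that is log \'etale and unique up to unique isomorphism.

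For rigidity, suppose $\wt{f}_i:\wt{Y}_i\ra \wt{X}$ ($i=1,2$) are two log \'etale liftings of $f$ over a common open piece, with fixed identifications $\wt{Y}_i\times_{\wt{X}}X=Y$. By the standard deformation calculus for log derivations, the set of $\wt{X}$-morphisms $g:\wt{Y}_1\ra\wt{Y}_2$ whose reduction modulo $\II$ equals $\mathrm{id}_Y$ is either empty or a torsor under $\Hom_{\OO_Y}(\Omega^{1,\log}_{Y/X},f^*\II)$. Log \'etaleness of $f$ means $\Omega^{1,\log}_{Y/X}=0$, so this torsor has at most one element. When it exists, its reduction modulo $\II$ being the identity forces it to be an isomorphism: the kernel and cokernel of the induced map on structure sheaves are annihilated by $\II$, and Nakayama applies since $\II$ is square zero. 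This simultaneously gives uniqueness of a lifting and triviality of its automorphism group.

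For local existence, I would invoke Kato's local structure theorem for log \'etale morphisms: \'etale-locally on $X$ and $Y$, the map $f$ factors as a strict \'etale morphism followed by a chart map $\spec\Z[P]\ra\spec\Z[Q]$ attached to an injective homomorphism of fs monoids $Q\hookrightarrow P$ whose cokernel is finite of order invertible on $X$. The strict \'etale component lifts uniquely by the classical SGA 1 result for \'etale morphisms along a square-zero thickening. Lifting the chart map is essentially combinatorial: since $X\subset\wt{X}$ is an \emph{exact} closed immersion, any fs chart on $X$ prolongs tautologically to one on $\wt{X}$ involving the same monoid, and then $\wt{Y}$ is constructed as the base change along $Q\ra P$. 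Log \'etaleness of the resulting $\wt{f}$ is checked on the canonical chart via the criterion that the associated monoid map is Kummer with index invertible on the base, a condition that depends only on the combinatorial data and is therefore preserved under the lifting.

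The main obstacle is producing compatible fs charts on $X$, $Y$ and $\wt{X}$ simultaneously so that Kato's local description of $f$ is available in a form that can be spread to $\wt{X}$; this is where the hypothesis that the thickening is \emph{exact} (so that the log structure is already determined by what sits on $X$) does the essential work. Once such aligned charts are chosen on a small enough \'etale neighbourhood, the lift of the chart data is immediate, the log \'etale property of the lift follows from the Jacobian/monoid criterion, and the rigidity established in the previous paragraph globalizes the construction to the desired $\wt{f}:\wt{Y}\ra\wt{X}$.
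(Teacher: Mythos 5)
Your argument is correct in substance but follows a genuinely different route from the paper's. The paper proves Theorem \ref{3.3} in three lines by quoting Kato's deformation theory for log smooth morphisms (\cite[Proposition 3.14]{kato89}): for an exact square-zero thickening, the obstruction to lifting lies in $H^2(Y,\omega^\vee\otimes\II)$, the liftings form an affine space under $H^1$, and the automorphisms of a lifting form $H^0$, where $\omega=\omega^1_{Y/X}$; log \'etaleness gives $\omega=0$ and all three groups vanish at once. You instead reconstruct this statement by hand in the style of SGA~1: rigidity from the torsor structure under $\Hom_{\OO_Y}(\Omega^{1,\log}_{Y/X},f^*\II)=0$, local existence from Kato's chart criterion plus the classical \'etale lifting theorem for the strict part, and gluing via the uniqueness of local isomorphisms. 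This is exactly the local-to-global argument that Kato's Proposition 3.14 packages cohomologically, so your approach is more self-contained but longer; the paper's buys brevity at the cost of invoking the full obstruction calculus.

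Two points in your write-up deserve repair. First, you repeatedly describe the local model of a log \'etale morphism as a \emph{Kummer} chart ($Q\hookrightarrow P$ injective with finite cokernel, ``the associated monoid map is Kummer with index invertible''). That is the local structure of Kummer \'etale covers (used later in Lemma \ref{3.4}), not of general log \'etale morphisms: Kato's criterion only requires the kernel and the (torsion of the) cokernel of $Q^{\mathrm{gp}}\ra P^{\mathrm{gp}}$ to be finite of order invertible, and e.g.\ log blow-ups are log \'etale without being Kummer. Theorem \ref{3.3} is stated for arbitrary log \'etale $f$, so you must use the general chart criterion; the rest of your construction goes through unchanged with it. Second, in the rigidity step you should note that the torsor of liftings of $\mathrm{id}_Y$ is \emph{nonempty} \'etale-locally because $\wt{f}_2$ is log smooth (infinitesimal lifting property), and then glues globally since the group acting is zero; and the cleanest way to see that the resulting $g:\wt{Y}_1\ra\wt{Y}_2$ is an isomorphism is by symmetry --- produce $h:\wt{Y}_2\ra\wt{Y}_1$ the same way and use uniqueness to get $h\circ g=\mathrm{id}$ and $g\circ h=\mathrm{id}$ --- rather than the Nakayama argument on structure sheaves, which as written presupposes control of $\II\OO_{\wt{Y}_i}$ that you have not established.
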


\begin{proof}
Let $\omega=\omega^1_{Y/X}$ be the relative sheaf of log differentials of $f$, and $\omega^\vee$ the dual sheaf
of $\omega$. By \cite[Proposition 3.14]{kato89}, there is an obstruction in $H^2(Y,\omega^\vee\otimes\II)$ to
the existence of such a lifting, moreover isomorphism classes of liftings form an affine space under
$H^1(Y,\omega^\vee\otimes\II)$ and the automorphism group of a lifting is $H^0(Y,\omega^\vee\otimes\II)$.
By assumption, $f:Y\ra X$ is log \'etale, hence $\omega=0$, which implies that the above three cohomology groups
are all zero.
\end{proof}

\begin{lem}\label{3.4}
With notation and assumptions as in Theorem \ref{3.3}, if furthermore $f:Y\ra X$ is a finite Kummer \'etale cover,
then so is $\wt{f}:\wt{Y}\ra \wt{X}$.
\end{lem}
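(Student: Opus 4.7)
The plan is to verify that the lift $\wt{f}:\wt{Y}\ra\wt{X}$ provided by Theorem \ref{3.3} satisfies the two additional properties required for a Kummer \'etale cover, namely: the underlying morphism of schemes is finite, and the induced map on characteristic monoids is Kummer at every geometric point. Since $\wt{f}$ is already log \'etale by Theorem \ref{3.3}, these are the only two points to check, and I would handle them simultaneously by exhibiting a local model.

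First, I would observe that both the Kummer property and finiteness are \'etale-local on $\wt{X}$, and since $X\inj\wt{X}$ is a closed immersion defined by a nilpotent ideal, the underlying topological spaces (and small \'etale sites) of $X$ and $\wt{X}$ agree, so localizing on $\wt{X}$ is equivalent to localizing on $X$. Hence it suffices to work at a geometric point. Using the local structure of Kummer \'etale covers (as in \cite{kato94}, \cite{na97}, \cite{il02}), one can choose, \'etale-locally around a chosen geometric point $\bar{x}$ of $X$, a chart $P\ra M_X$ of the log structure and a Kummer homomorphism of fs monoids $P\inj Q$ (meaning $Q^{gp}/P^{gp}$ is a finite group of order invertible on $X$) such that $f:Y\ra X$ is, locally, the base change
\[
Y\cong X\times_{\spec\Z[P]}\spec\Z[Q]\ra X,
\]
equipped with its natural log structure.

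Next, I would lift this local model to $\wt{X}$. Since $X\inj\wt{X}$ is exact, the chart $P\ra M_X$ extends \'etale-locally to a chart $P\ra M_{\wt{X}}$. Define
\[
\wt{Y}':=\wt{X}\times_{\spec\Z[P]}\spec\Z[Q],
\]
endowed with its natural log structure and its canonical morphism $\wt{f}':\wt{Y}'\ra\wt{X}$. Because $P\inj Q$ is Kummer, $Q$ is finitely generated as a $P$-module, hence $\Z[Q]$ is a finite $\Z[P]$-algebra; consequently $\wt{f}'$ is a finite Kummer \'etale cover. Moreover, reducing modulo $\II$ recovers precisely $f:Y\ra X$, so $\wt{f}'$ is a log \'etale lifting of $f$ along $X\inj\wt{X}$.

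By the uniqueness part of Theorem \ref{3.3}, there is a unique isomorphism $\wt{Y}\cong \wt{Y}'$ over $\wt{X}$ extending the identity on $Y$. The local isomorphisms so obtained glue, by uniqueness, to a global identification of $\wt{f}$ with the local models $\wt{f}'$, and transferring the properties across this identification shows that $\wt{f}$ is finite and Kummer, completing the proof. The only nontrivial step is the local-structure input in the second paragraph (the existence of a Kummer chart for $f$ that lifts to $\wt{X}$); once this is granted, the remainder is a routine application of the uniqueness in Theorem \ref{3.3}.
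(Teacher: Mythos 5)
Your argument is correct and follows essentially the same route as the paper's proof: reduce to the étale-local standard model of a Kummer étale cover via a chart $P\ra Q$, lift the chart $P\ra M_X$ to $P\ra M_{\wt{X}}$ using exactness of the closed immersion, observe that the (fs) pullback of $\spec\Z[Q]\ra\spec\Z[P]$ along the lifted chart is a finite Kummer étale lifting of $f$, and invoke the uniqueness in Theorem \ref{3.3} to identify it with $\wt{f}$. The only cosmetic difference is that the paper phrases the local model as an fs pullback (i.e.\ with saturation) rather than a plain fiber product, which you should keep in mind but which does not affect the argument.
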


\begin{proof}
Locally for the \'etale topology, any Kummer \'etale cover is isomorphic to a standard Kummer \'etale cover
(cf.\ \cite[3.1.4]{st02}). Therefore, \'etale locally around a geometric point $x$ of $X$, there is a chart
$P\ra M_X$ and a Kummer morphism $P\ra Q$ of fs monoids with $Q^{gp}/P^{gp}$ annihilated by an integer $n$
invertible on $X$, such that $f:Y\ra X$ is the fs pullback of the morphism
$\spec\Z[Q]\ra \spec\Z[P]$ (cf.\ \cite[Proposition 3.2 and Definition 3.5]{il02}):
\[
\xymatrix{
Y \ar[d]_{f} \ar[r] & \spec\Z[Q] \ar[d] \\
X \ar[r] & \spec\Z[P].
}
\]
Since $X\subset \wt{X}$ is an exact closed immersion and the lifting $\wt{f}:\wt{Y}\ra \wt{X}$ is unique,
there is a lifting $P\ra M_{\wt{X}}$ such that $\wt{f}:\wt{Y}\ra \wt{X}$ is the fs pullback
of the same morphism by the lifted chart $\wt{X}\ra \spec\Z[P]$:
\[
\xymatrix{
\wt{Y} \ar[d]_{\wt{f}} \ar[r] & \spec\Z[Q] \ar[d] \\
\wt{X} \ar[r] & \spec\Z[P].
}
\]
Thus $\wt{f}:\wt{Y}\ra \wt{X}$ is also a finite Kummer \'etale cover.
\end{proof}

\begin{lem}\label{3.5}
Let $X$ be a regular scheme, and $\LL$ an invertible sheaf on $X$. Let $n$ be a positive integer prime to $p$,
and $\sum_{i\in I}D_i$ a simple normal crossing divisor on $X$ such that $\LL^n\cong \OO_X(\sum_{i\in I}a_iD_i)$
for some integers $a_i>0$. Then the associated cyclic cover $f:Y\ra X$ is a finite Kummer \'etale cover
with Galois group $\mu_n$.
\end{lem}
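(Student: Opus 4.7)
The plan is to check that the cyclic cover $f: Y \to X$ is, \'etale-locally on $X$, an fs pullback of a standard chart morphism $\spec\Z[Q]\ra \spec\Z[P]$ associated with a Kummer morphism of fs monoids, so that the whole cover will fit the \'etale-local description of a Kummer \'etale cover recalled inside the proof of Lemma \ref{3.4}.

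First I would equip $X$ with the log structure $M_X$ associated to the simple normal crossing divisor $\sum_{i\in I}D_i$, making $X$ an fs log scheme which is log regular. Around a geometric point $x\in X$, let $D_{i_1},\ldots,D_{i_r}$ be the components containing $x$ with local equations $t_1,\ldots,t_r$, giving an \'etale-local chart $P=\N^r\ra M_X$, $e_j\mapsto t_j$. Since $\LL$ is locally free, I can choose an \'etale neighborhood on which $\LL$ is generated by some $e$ and the isomorphism $\LL^n\cong\OO_X(\sum a_i D_i)$ carries $e^n$ to $\prod_{j=1}^r t_j^{a_{i_j}}$ (all other $a_i$'s vanish near $x$). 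Then the cyclic cover $f:Y\ra X = \Spec_X\bigoplus_{m=0}^{n-1}\LL^{-m}$ is given \'etale-locally by
\[
Y|_U = \spec \OO_U[z]/(z^n - t_1^{a_{i_1}}\cdots t_r^{a_{i_r}}).
\]

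Next I would introduce the monoid $Q$ as the saturation inside $P^{gp}\otimes_\Z\Q$ of $P + \N\cdot\frac{1}{n}s$, where $s=\sum_{j=1}^r a_{i_j}e_j\in P$. The natural inclusion $P\hookrightarrow Q$ is a Kummer morphism of fs monoids with $Q^{gp}/P^{gp}\cong\Z/n\Z$, and $n$ is invertible on $X$ by hypothesis $p\nmid n$. A direct computation gives $\Z[Q]\cong \Z[P][z]/(z^n - x^s)$, so the fs pullback
\[
\xymatrix{
Y|_U \ar[d]_{f} \ar[r] & \spec\Z[Q] \ar[d] \\
U \ar[r] & \spec\Z[P]
}
\]
identifies $Y|_U$ with the standard Kummer \'etale cover associated to $P\ra Q$ (cf.\ \cite[Proposition 3.2 and Definition 3.5]{il02}). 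By the \'etale-local characterization of Kummer \'etale covers used in the proof of Lemma \ref{3.4}, $f:Y\ra X$ is therefore a finite Kummer \'etale cover.

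Finally, the Galois group is read off from the character dual of $Q^{gp}/P^{gp}$: the group scheme acting on $\spec\Z[Q]$ over $\spec\Z[P]$ is $\Hom(Q^{gp}/P^{gp},\mathbb G_m)=\mu_n$, and under the identification above this is exactly the standard action $z\mapsto \zeta z$ on the cyclic cover. The main technical point that needs care is the verification that $Q$ built as a saturation in $\frac{1}{n}P^{gp}$ really recovers the ring $\Z[P][z]/(z^n-x^s)$ and that the fs pullback does not introduce additional normalization; this is where one uses that the data $(a_{i_j})$ and $n$ are such that $Q^{gp}/P^{gp}\cong\Z/n\Z$ is cyclic, so that $\mathbb Z[Q]$ is a free $\mathbb Z[P]$-module of rank $n$ with basis $1,z,\ldots,z^{n-1}$, matching the Koszul-type description of the cyclic cover.
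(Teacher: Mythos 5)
Your overall architecture (local chart $P=\N^r$, a Kummer extension of monoids $P\ra Q$, fs pullback of $\spec\Z[Q]\ra\spec\Z[P]$) is the same as the paper's, but the way you close the argument has a genuine gap. The two assertions you rely on --- that $\Z[Q]\cong\Z[P][z]/(z^n-x^s)$ is free of rank $n$ over $\Z[P]$ with basis $1,z,\dots,z^{n-1}$, and that the cyclic cover is \'etale-locally $\spec\OO_U[z]/(z^n-t_1^{a_{i_1}}\cdots t_r^{a_{i_r}})$ --- are both false for general $a_i>0$. Take $P=\N$, $n=3$, $s=2e_1$: then $\Z[P][z]/(z^3-t^2)$ is the non-normal cuspidal cubic, while your saturated monoid is $Q=\frac13\N\cong\N$, so $\Z[Q]\cong\Z[w]$ is its \emph{normalization} and is not generated over $\Z[P]$ by $1,z,z^2$. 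Correspondingly, the cyclic cover of \cite[Chapter 3]{ev} is by definition $\Spec_X\bigoplus_{i=0}^{n-1}\LL^{(i)-1}$ with $\LL^{(i)}=\LL^i(-[\frac{i}{n}\sum a_jD_j])$, i.e.\ the normalization of your local model; the two differ whenever some $a_{i_j}>1$, and a non-normal $Y$ could not be Kummer \'etale over the log regular $X$ in any case. Your claim $Q^{gp}/P^{gp}\cong\Z/n\Z$ also fails when $\gcd(a_{i_j},n)\neq 1$; worse, your $Q$ (a saturation inside the torsion-free $P^{gp}\otimes\Q$) is not the fs pushout the paper uses, whose groupification retains torsion --- for $n=2$, $s=2e_1$ your $Q$ equals $P$ and produces the trivial cover instead of the degree-$2$ \'etale cover.

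The paper sidesteps all explicit computation of $\Z[Q]$: it takes $Q$ to be the saturation of the amalgamated sum of $l\mapsto nl$ and $\sigma(1)=(a_i)$, invokes \cite[Proposition 3.2]{il02} to see that the fs pullback $Y'\ra X$ is a Kummer \'etale cover, observes that $Y'$ is log regular (hence normal) because $X$ is log regular and $Y'\ra X$ is log \'etale, and notes that over the complement of $\sum_{i\in I}D_i$ the cover $Y'$ is the classical tame $\mu_n$-cover $z^n=\prod t_i^{a_i}$. Hence $Y'$ is the normalization of $X$ in that cover, which is precisely the definition of the cyclic cover $Y$. This normality/normalization identification is the step your proof is missing, and it is what makes the argument work for arbitrary $a_i>0$ (as needed, e.g., in Theorem \ref{3.6}); your computation of $\Z[Q]$ would only be correct under the extra hypothesis that all $a_{i_j}=1$.
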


\begin{proof}
\'Etale locally around a geometric point $x$ of $X$, $f:Y\ra X$ is given by a cartesian square of fs log schemes:
\[
\xymatrix{
Y \ar[d]_{f} \ar[r] & \spec\Z[Q] \ar[d] \\
X \ar[r] & \spec\Z[\N^r],
}
\]
where the lower morphism is induced by a chart $\N^r\ra M_X$ sending $e_i\in\N^r$ to $t_i$, where $(t_i)_{i\in J}$
is a set of local equations of the branches of $\sum_{i\in I}D_i$ passing through $x$, and $Q$ is given by a
cocartesian square of fs monoids:
\[
\xymatrix{
\N \ar[r] & Q \\
\N \ar[u]^{n\cdot} \ar[r]^\sigma & \N^r, \ar[u]
}
\]
where $\sigma(1)=(a_i)_{i\in J}$, i.e.\ $Q$ is the saturation of the amalgamated sum of $l\mapsto nl$ and $\sigma$.
Indeed, denote by $f':Y'\ra X$ the fs pullback of $\spec\Z[Q]\ra \spec\Z[\N^r]$ by the morphism $X\ra \spec \Z[\N^r]$.
Then $f':Y'\ra X$ is a Kummer \'etale cover by \cite[Proposition 3.2]{il02}. Since $X$ is log regular and $f':Y'\ra X$
is log \'etale, $Y'$ is also log regular, hence is normal by \cite[7.3(c,e)]{il02}. Over the complement of
$\sum_{i\in I}D_i$, $Y'$ is the classical $\mu_n$-\'etale cover defined by the equation $z^n=\prod_{i\in J}t_i^{a_i}$,
so $Y'$ is just the normalization of $X$ in this classical \'etale cover, which coincides with the definition of the
cyclic cover $Y$ (cf. \cite[Chapter 3]{ev}).
\end{proof}

\begin{thm}\label{3.6}
Let $X$ be a smooth scheme over $S=\spec k$, and $\LL$ an invertible sheaf on $X$. Let $n$ be a positive integer
prime to $p$, and $\sum_{i\in I}D_i$ a simple normal crossing divisor on $X$ such that $\LL^n\cong
\OO_X(\sum_{i\in I}a_iD_i)$ for some integers $a_i>0$. Assume that $(X,\sum_{i\in I}D_i)$ has a lifting
$(\wt{X},\sum_{i\in I}\wt{D}_i)$ over $\wt{S}=\spec W_2(k)$. Then the associated cyclic cover $f:Y\ra X$ is
liftable over $\wt{S}$.
\end{thm}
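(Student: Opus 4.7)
The plan is to reduce the statement to the logarithmic lifting theorems established above. First I endow $\wt{X}$ with the fs log structure $M_{\wt{X}}$ associated to the simple normal crossing divisor $\sum_{i\in I}\wt{D}_i$, and $X$ with the fs log structure $M_X$ associated to $\sum_{i\in I}D_i$, which coincides with the pullback of $M_{\wt{X}}$. Then $X\hookrightarrow\wt{X}$ becomes an exact closed immersion of fs log schemes whose defining ideal $p\OO_{\wt{X}}$ satisfies $(p\OO_{\wt{X}})^2=0$ since $p^2=0$ in $W_2(k)$. This puts us precisely in the framework of Theorem \ref{3.3}.

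Next, I equip $Y$ with the pullback log structure via $f:Y\ra X$. By Lemma \ref{3.5} applied to the data $(X,\LL,\sum_{i\in I}D_i,\{a_i\})$, the morphism $f:Y\ra X$ is then a finite Kummer \'etale cover of fs log schemes with Galois group $\mu_n$. Applying Theorem \ref{3.3} to $f$, I obtain a log \'etale lifting $\wt{f}:\wt{Y}\ra\wt{X}$, unique up to unique isomorphism, and Lemma \ref{3.4} upgrades this lift to a finite Kummer \'etale cover.

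It then remains to check that the underlying scheme of $\wt{Y}$, obtained by forgetting the log structure, provides an honest lifting of $Y$ over $W_2(k)$. Since the reduction of $\wt{f}$ modulo $p$ is $f$ and $X\hookrightarrow\wt{X}$ is exact, the base change $\wt{Y}\times_{\spec W_2(k)}\spec k$ recovers $Y$ as a scheme. Flatness of $\wt{Y}$ over $W_2(k)$ follows from $\wt{X}$ being flat over $W_2(k)$ together with the flatness of $\wt{f}$: \'etale locally $\wt{f}$ is the fs pullback of the finite flat morphism $\spec\Z[Q]\ra\spec\Z[\N^r]$ described in the proof of Lemma \ref{3.5}, and Kummer morphisms of fs monoids with order invertible on the base give rise to finite flat covers.

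The point requiring the most care is this last step, namely verifying that the log-theoretic lift $\wt{Y}$ recovers the expected cyclic-cover-type structure on the level of ordinary schemes. I expect to handle it by computing $\wt{Y}$ in an \'etale neighborhood of a geometric point: using the lifted chart $\wt{X}\ra\spec\Z[\N^r]$ induced by lifts $\wt{t}_i$ of local equations of $\wt{D}_i$, the fs pullback yields $\wt{Y}$ locally as $\spec\OO_{\wt{X}}[\wt{z}]/(\wt{z}^n-\prod_{i\in J}\wt{t}_i^{a_i})$, which is manifestly finite flat over $\wt{X}$ and reduces modulo $p$ to the local description of $Y$ in Lemma \ref{3.5}. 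Here the uniqueness assertion in Theorem \ref{3.3} is essential, as it is what allows the local models to be glued into a global lift of $Y$ without needing to lift the isomorphism $\LL^n\cong\OO_X(\sum a_iD_i)$ by hand.
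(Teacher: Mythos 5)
Your proposal is correct and follows essentially the same route as the paper: endow $\wt{X}$ with the log structure determined by $\sum_{i\in I}\wt{D}_i$, use Lemma \ref{3.5} to see that $f$ is a finite Kummer \'etale cover, lift it via Theorem \ref{3.3} and Lemma \ref{3.4}, and then check flatness of $\wt{Y}$ over $W_2(k)$. The only cosmetic difference is that the paper deduces flatness from $\wt{Y}$ being log smooth over $\spec W_2(k)$ (with trivial log structure on the base), whereas you deduce it from the finite flat local model of the lifted Kummer cover; both are valid.
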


\begin{proof}
Put the trivial log structure on both $S$ and $\wt{S}$, and put the log structure $M_{\wt{X}}$ on $\wt{X}$ given by
the family $\{\OO_{\wt{X}}(\wt{D}_i),i\in I\}$ (cf.\ \cite[Complement 1]{kato89}). Then $\wt{X}/\wt{S}$ is a log
smooth lifting of $X/S$, having \'etale locally a chart $\N^r\ra M_{\wt{X}}$, whose composition with $M_{\wt{X}}\ra
\OO_{\wt{X}}$ sends $e_i\in\N^r$ to a local equation $\wt{t}_i$ of the lifting $\wt{D}_i$ of $D_i$.
By Theorem \ref{3.3}, Lemmas \ref{3.4} and \ref{3.5}, $f:Y\ra X$ is a finite Kummer \'etale cover, hence globally
lifts to a finite Kummer \'etale cover $\wt{f}:\wt{Y}\ra \wt{X}$, which \'etale locally on $\wt{X}$ is a standard
Kummer \'etale cover modeled on $\N^r\ra Q$ as in Lemma \ref{3.5}. $\wt{X}$ is log smooth over $\wt{S}$ and $\wt{f}:
\wt{Y}\ra \wt{X}$ is log \'etale, hence $\wt{Y}$ is log smooth over $\wt{S}$, in particular is flat (as a scheme)
over $\wt{S}$. Thus $\wt{Y}$ is a lifting of $Y$ over $\wt{S}$.
\end{proof}

From now on, we return to the proof of Theorem \ref{3.1} and first give a similar construction to that in
\cite[Theorem 1-1-1]{kmm}. Let $d=\dim X$. Take a positive integer $m$ such that $m\langle D\rangle$ is integral,
$p\nmid m$ and $m$ is sufficiently large if necessary. Take a very ample divisor $M$ on $X$ such that $mM-D_i$ is
also very ample for all $i\in I$. Take general members $H^{(i)}_k\in |mM-D_i|$ for all $i\in I$ and $1\leq k\leq d$
such that $\Supp\langle D\rangle\cup\Supp(\Sigma_{i,k}H^{(i)}_k)$ is a simple normal crossing divisor.
Let $X=\cup_{\al\in A}U_\al$ be an affine open cover of $X$ which trivializes $M$,
$\{a_{\al\beta}\in H^0(U_\al\cap U_\beta,\OO_X^*)\}$ the transition function of $M$,
and $\{\varphi^{(i)}_{k\al}\in H^0(U_\al,\OO_X)\}$ local sections such that
\[
(H^{(i)}_k+D_i)|_{U_\al}=\divisor(\varphi^{(i)}_{k\al})\,\,\hbox{on}\,\,U_\al\,\,\hbox{and}\,\,
\varphi^{(i)}_{k\al}=a_{\al\beta}^m\varphi^{(i)}_{k\beta}\,\,\hbox{on}\,\,U_\al\cap U_\beta.
\]
Note that $K(X)\big[ (\varphi^{(i)}_{k\al})^{1/m} \big]_{i,k}=K(X)\big[ (\varphi^{(i)}_{k\beta})^{1/m} \big]_{i,k}$
for any $\al,\beta\in A$, so we can take the normalization of $X$ in $K(X)\big[ (\varphi^{(i)}_{k\al})^{1/m} \big]_{i,k}$
for some $\al\in A$, which is denoted by $Y$. Since $K(Y)/K(X)$ is a Kummer extension, the induced morphism
$\tau:Y\ra X$ is a finite Galois morphism.

Take any closed point $x\in U_\al$ and set $I_x=\{i\in I\,|\,x\in D_i\}$. Then for each $i\in I_x$, there exists
$1\leq k_i\leq d$ such that $x\not\in H^{(i)}_{k_i}$. Put
\[
R_x=\{\varphi^{(i)}_{k_i\al}\,|\,i\in I_x\}\cup \{\varphi^{(i)}_{k\al}/\varphi^{(i)}_{k_i\al}\,|\,i\in I_x,x\in H^{(i)}_k\}\cup
\{\varphi^{(i)}_{k\al}\,|\,i\in I\setminus I_x,x\in H^{(i)}_k\}.
\]
Then the equations in $R_x$ form a part of a regular system of parameters of $\OO_{X,x}$. Put
\[
T_x=\{\varphi^{(i)}_{k\al}/\varphi^{(i)}_{k_i\al}\,|\,i\in I_x,x\not\in H^{(i)}_k\}\cup
\{\varphi^{(i)}_{k\al}\,|\,i\in I\setminus I_x,x\not\in H^{(i)}_k\}.
\]
Then any equation $\psi\in T_x$ is a unit in $\OO_{X,x}$. Since $p\nmid m$, by the same argument as in \cite[Lemma 1-1-2]{kmm},
we can prove that $Y$ is nonsingular, hence is smooth over $k$.

\begin{lem}\label{3.7}
The induced morphism $\tau:Y\ra X$ is a finite Kummer \'etale cover.
\end{lem}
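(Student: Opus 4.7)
The plan is to invoke Theorem \ref{3.2} for the fs log structure on $X$ defined by the simple normal crossing divisor
\[
E \;=\; \Supp\langle D\rangle \,\cup\, \Supp\Big(\sum_{i\in I,\,1\leq k\leq d} H^{(i)}_k\Big),
\]
and set $U = X\setminus E$. I would show (a) that $\tau^{-1}(U)\to U$ is a classical finite \'etale cover, and (b) that $\tau$ is tamely ramified along $E$. Granted (a) and (b), the quasi-inverse to the equivalence in Theorem \ref{3.2}, given by taking the normalization of $X$ in the cover of $U$ (as recalled immediately after Theorem \ref{3.2}), identifies $\tau:Y\to X$ with a finite Kummer \'etale cover.

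For (a), on $U$ every section $\varphi^{(i)}_{k\alpha}$ is invertible, since its divisor $H^{(i)}_k + D_i$ lies in $E$. Hence the extension $K(X)\big[(\varphi^{(i)}_{k\alpha})^{1/m}\big]_{i,k}$ defines over $U$ an iterated Kummer extension by units of order dividing $m$, which is classically \'etale because $p\nmid m$.

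For (b), fix a closed point $x\in U_\alpha$ and use the local analysis preceding the lemma: the elements of $R_x$ extend to a regular system of parameters of $\OO_{X,x}$ cutting out the branches of $E$ through $x$, while the elements of $T_x$ are units in $\OO_{X,x}$. By means of the identity $\varphi^{(i)}_{k\alpha}=\varphi^{(i)}_{k_i\alpha}\cdot\big(\varphi^{(i)}_{k\alpha}/\varphi^{(i)}_{k_i\alpha}\big)$ for $i\in I_x$, each $\varphi^{(i)}_{k\alpha}$ factors as a product of elements of $R_x$ (each with exponent at most $1$) times a unit. Adjoining $m$-th roots of units contributes only an \'etale extension (as $p\nmid m$), whereas adjoining $m$-th roots of regular parameters produces the standard tamely ramified Kummer cover of order $m$ along the corresponding branches. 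Consequently, all ramification indices of $\tau$ along the components of $E$ divide $m$ and hence are prime to $p$.

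The main technical point is making (b) precise enough to match the \'etale-local Kummer model $\spec k[t_i^{1/m}]_{i\in J}\to\spec k[t_i]_{i\in J}$ appearing in the standard Kummer \'etale charts of Lemmas \ref{3.4}--\ref{3.5}; once that local identification is in hand, Theorem \ref{3.2}, together with the normality of the already-smooth $Y$, yields the conclusion.
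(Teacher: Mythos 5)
Your proof is correct, and it rests on the same two pillars as the paper's: the equivalence of Theorem \ref{3.2} (which in the simple normal crossing case is Abhyankar's lemma, with normalization as quasi-inverse) and the local analysis via $R_x$ and $T_x$ showing that each $\varphi^{(i)}_{k\al}$ is a unit times a square-free product of regular parameters. The difference is one of direction. You argue ``top-down'': you check that $\tau^{-1}(U)\ra U$ is classically \'etale (the $\varphi^{(i)}_{k\al}$ are units on $U$ and $p\nmid m$) and tamely ramified along $E$, then let the quasi-inverse of (\ref{es7}) identify the normalization $Y$ with a Kummer \'etale cover. The paper argues ``bottom-up'': it writes down, \'etale-locally at each $x$, an explicit standard Kummer \'etale cover as the fs pullback of $\spec\Z[Q]\ra\spec\Z[\N^r]$ for the chart sending $e_i$ to the elements of $R_x$, shows the resulting $Y'$ is log regular hence normal, and identifies $Y'$ with $Y$ because both are the normalization of $X$ in the same classical cover of $U$. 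Your route is sufficient for the lemma as stated and arguably cleaner; what the paper's route buys is the explicit local model $\N^r\ra Q$, which is reused verbatim in the proof of Theorem \ref{3.1} (the lifted cover $\wt{\tau}:\wt{Y}\ra\wt{X}$ is ``modeled on $\N^r\ra Q$ as in Lemma \ref{3.7}''), so if you adopt your version you would still need to extract that chart, e.g.\ by the reduction to standard covers as in Lemma \ref{3.4}. One minor point to make explicit in step (a): the local Kummer extensions over the various $U_\al$ glue into a single cover of $U$ because $\varphi^{(i)}_{k\al}=a_{\al\be}^m\varphi^{(i)}_{k\be}$, so the adjoined $m$-th roots generate the same field extension $K(Y)$ independently of $\al$.
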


\begin{proof}
Put the log structure on $X$ defined by the simple normal crossing divisor $\Supp\langle D\rangle\cup\Supp(\Sigma_{i,k}H^{(i)}_k)$.
Let $t_i$ and $s_{i,k}$ $(i\in I,1\leq k\leq d)$ be the local parameters of $D_i$ and $H^{(i)}_k$ respectively.
\'Etale locally around a geometric point $x$ of $X$, $\tau:Y\ra X$ is given by a cartesian square of fs log schemes:
\[
\xymatrix{
Y \ar[d]_{\tau} \ar[r] & \spec\Z[Q] \ar[d] \\
X \ar[r] & \spec\Z[\N^r],
}
\]
where the lower morphism is induced by a chart $\N^r\ra M_X$ sending $e_i\in\N^r$ to $t_is_{i,k_i}$, $s_{i,k}/s_{i,k_i}$ and $t_is_{i,k}$
respectively, which are described as in $R_x$, and $Q$ is given by a cocartesian square of fs monoids:
\[
\xymatrix{
\N \ar[r] & Q \\
\N \ar[u]^{m\cdot} \ar[r]^\sigma & \N^r, \ar[u]
}
\]
where $\sigma(1)=(1,\cdots,1)$, i.e.\ $Q$ is the saturation of the amalgamated sum of $l\mapsto ml$ and $\sigma$.
Indeed, denote by $\tau':Y'\ra X$ the fs pullback of $\spec\Z[Q]\ra \spec\Z[\N^r]$ by the morphism $X\ra \spec \Z[\N^r]$.
Then $\tau':Y'\ra X$ is a Kummer \'etale cover by \cite[Proposition 3.2]{il02}. Since $X$ is log regular and $\tau':Y'\ra X$
is log \'etale, $Y'$ is also log regular, hence is normal by \cite[7.3(c,e)]{il02}. Over the complement of
$\Supp\langle D\rangle\cup\Supp(\Sigma_{i,k}H^{(i)}_k)$, $Y'$ is the classical \'etale cover defined by the equations
$z^m_{i,k}=t_is_{i,k}$ $(i\in I,1\leq k\leq d)$, so $Y'$ is just the normalization of $X$ in this classical \'etale cover,
which coincides with the construction of $Y$.
\end{proof}

\begin{proof}[Proof of Theorem \ref{3.1}]
Let $\wt{X}$ be a lifting of $X$ over $W_2(k)$ as in Definition \ref{2.2}, $\wt{D}_i$ liftings of $D_i$ and $\wt{H}^{(i)}_k$
liftings of $H^{(i)}_k$ for all $i\in I$ and $1\leq k\leq d$. Put the trivial log structure on both $S=\spec k$ and
$\wt{S}=\spec W_2(k)$, and put the log structure $M_{\wt{X}}$ on $\wt{X}$ given by the family $\{\OO_{\wt{X}}(\wt{D}_i),
\OO_{\wt{X}}(\wt{H}^{(i)}_k),i\in I,1\leq k\leq d\}$ (cf.\ \cite[Complement 1]{kato89}). Then $\wt{X}/\wt{S}$ is a log smooth
lifting of $X/S$. By Theorem \ref{3.3}, Lemmas \ref{3.4} and \ref{3.7}, $\tau:Y\ra X$ is a finite Kummer \'etale cover,
hence globally lifts to a finite Kummer \'etale cover $\wt{\tau}:\wt{Y}\ra \wt{X}$, which \'etale locally on $\wt{X}$
is a standard Kummer \'etale cover modeled on $\N^r\ra Q$ as in Lemma \ref{3.7}. $\wt{X}$ is log smooth over $\wt{S}$ and
$\wt{\tau}:\wt{Y}\ra \wt{X}$ is log \'etale, hence $\wt{Y}$ is log smooth over $\wt{S}$, in particular is flat (as a scheme)
over $\wt{S}$. Thus $\wt{Y}$ is a lifting of $Y$ over $\wt{S}$, which proves the condition (iii). As for the conditions
(i) and (ii), the proof is similar to that in \cite[Theorem 1-1-1]{kmm}.
\end{proof}

\begin{rem}\label{3.8}
The most difficult part in Theorem \ref{3.1} is to prove that $Y$ is liftable over $W_2(k)$. The variety $Y$ was constructed 
affine locally and defined to be the normalization of $X$ in a certain Kummer extension. In fact, liftings $\wt{Y}$ of $Y$ 
can be easily and naturally constructed on affine pieces, and it is a naive method to glue these liftings together to get 
a global lifting $\wt{Y}$ of $Y$. However, we encountered many difficulties in this way, and finally, we had to use the 
logarithmic techniques to yield the required global lifting $\wt{Y}$ of $Y$ in the proof of Theorem \ref{3.1}.
\end{rem}

By means of the Kummer covering trick over $W_2(k)$, we can construct a large class of liftable smooth projective varieties
from strongly liftable schemes. On the other hand, the following corollary, i.e.\ the Kawamata-Viehweg vanishing theorem
\cite{ka82,vi82} on strongly liftable schemes, is a direct consequence of Theorem \ref{3.1}. It should be mentioned that
Corollary \ref{3.9} has already been proved in \cite{ha98} and \cite{mo} by different methods.

\begin{cor}\label{3.9}
Let $X$ be a strongly liftable smooth projective variety of dimension $d$, and $D$ an ample $\Q$-divisor on $X$
such that $\Supp\langle D\rangle$ is simple normal crossing. Then we have $H^i(X,K_X+\ulcorner D\urcorner)=0$
for any $i>d-\inf(d,p)$.
\end{cor}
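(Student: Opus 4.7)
The plan is to deduce the corollary from Theorem~\ref{3.1} combined with the Deligne--Illusie form of Kodaira vanishing for smooth projective varieties liftable over $W_2(k)$. The Kummer cover $\tau:Y\ra X$ produced by Theorem~\ref{3.1} reduces a $\Q$-divisor statement on $X$ to an integral ample divisor statement on the liftable smooth projective $Y$, on which Deligne--Illusie applies verbatim.

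A preliminary reduction is needed, since Theorem~\ref{3.1} requires $p\nmid b_i$ for every coefficient $a_i/b_i$ of $\langle D\rangle$, whereas the corollary does not postulate this. Because $D$ is ample and ampleness is an open condition in $\mathrm{NS}(X)\otimes\R$, I can replace each offending coefficient $a_i/b_i$ (with $p\mid b_i$) by a rational $a_i'/b_i'\in(0,1)$ with $p\nmid b_i'$ sufficiently close to $a_i/b_i$. The resulting $\Q$-divisor $D'$ is then still ample, satisfies $\ulcorner D'\urcorner=\ulcorner D\urcorner$ (every affected coefficient stays in $(0,1)$, hence rounds up to $1$), and $\Supp\langle D'\rangle\subseteq\Supp\langle D\rangle$ remains simple normal crossing. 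After replacing $D$ by $D'$ we may assume $p\nmid b_i$ for all $i$.

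Applying Theorem~\ref{3.1} to $D$ now yields a finite Galois cover $\tau:Y\ra X$ with $Y$ smooth projective and liftable over $W_2(k)$, $\tau^*D$ an integral Cartier divisor, and $\OO_X(K_X+\ulcorner D\urcorner)$ a direct summand (as an $\OO_X$-module) of $\tau_*\OO_Y(K_Y+\tau^*D)$. Because $\tau$ is finite and $D$ is ample, $\tau^*D$ is an ample integral divisor on $Y$; because $\tau$ is affine, $R^i\tau_*=0$ for $i>0$. Taking cohomology of the direct summand decomposition therefore produces an injection
\[
H^i(X,K_X+\ulcorner D\urcorner)\inj H^i(Y,K_Y+\tau^*D).
\]

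Finally, I invoke the Deligne--Illusie Kodaira vanishing theorem: for a smooth projective variety $Y$ of dimension $d$ liftable over $W_2(k)$ and any ample invertible sheaf $L$,
\[
H^i(Y,\Omega^j_Y\otimes L^{-1})=0 \quad \text{for } i+j<\inf(d,p).
\]
Taking $j=0$ and dualizing by Serre duality gives $H^i(Y,K_Y\otimes L)=0$ for $i>d-\inf(d,p)$. Applied with $L=\OO_Y(\tau^*D)$, this kills the target of the injection above and yields the desired vanishing on $X$. The main obstacle is really the preliminary perturbation, where one must simultaneously preserve ampleness, the round-up $\ulcorner D\urcorner$, and the simple normal crossing property of $\Supp\langle D\rangle$ under finitely many small rational approximations; beyond that, the corollary is a formal consequence of the direct summand property of Theorem~\ref{3.1}(ii) and Deligne--Illusie.
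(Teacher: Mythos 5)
Your proposal is correct and follows essentially the same route as the paper: perturb the coefficients of $\langle D\rangle$ using openness of ampleness so that $p\nmid b_i$ while $\ulcorner D\urcorner$ is preserved, apply Theorem \ref{3.1} to get the liftable Kummer cover $\tau:Y\ra X$, use the direct-summand statement of Theorem \ref{3.1}(ii) to inject $H^i(X,K_X+\ulcorner D\urcorner)$ into $H^i(Y,K_Y+\tau^*D)$, and conclude by the Deligne--Illusie form of Kodaira vanishing on $Y$. The only cosmetic difference is that you spell out the Serre-duality/Akizuki--Nakano bookkeeping that the paper compresses into the citation of \cite[Corollaire 2.8]{di}.
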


\begin{proof}
We write $\langle D\rangle=\sum_{i\in I}\displaystyle{\frac{a_i}{b_i}}D_i$ such that $0<a_i<b_i$, $(a_i,b_i)=1$ for all $i\in I$,
and $\sum_{i\in I}D_i$ is simple normal crossing. Since ampleness is an open condition, we can change the coefficients $a_i/b_i$
slightly such that $D$ is still ample, and $p\nmid b_i$ for all $i\in I$, whereas $\ulcorner D\urcorner$ is unchanged.

Let $\tau:Y\ra X$ be a Kummer covering associated to $D$ as in Theorem \ref{3.1}. Then $Y$ is a smooth projective variety which is
liftable over $W_2(k)$. By the Kodaira vanishing theorem \cite[Corollaire 2.8]{di}, we have $H^i(Y,K_Y+\tau^*D)=0$ for any $i>d-\inf(d,p)$.
By Theorem \ref{3.1}(ii), there is an injection $H^i(X,K_X+\ulcorner D\urcorner)\inj H^i(X,\tau_*\OO_Y(K_Y+\tau^*D))=H^i(Y,K_Y+\tau^*D)$,
hence $H^i(X,K_X+\ulcorner D\urcorner)=0$ holds for any $i>d-\inf(d,p)$.
\end{proof}

\section{Strongly liftable schemes over $W(k)$}\label{S5}

In this section, we shall generalize almost all of the results in \cite{xie10,xie11} to the case where everything is
considered over $W(k)$, the ring of Witt vectors of $k$. First of all, we give some definitions and preliminary results.

\begin{defn}\label{5.1}
Denote $W(k)$ the ring of Witt vectors of $k$. The underlying set of $W(k)$ is just $k^\N$, so a Witt vector
is written as $a=(a_0,a_1,\cdots,a_n,\cdots)$, where $a_i\in k$ are called the coordinates of $a$. For
$a,b\in W(k)$, the addition and the multiplication of $a$ and $b$ in $W(k)$ are defined as follows:
\begin{eqnarray*}
a+b &=& (S_0(a,b),S_1(a,b),\cdots,S_n(a,b),\cdots), \\
a\cdot b &=& (P_0(a,b),P_1(a,b),\cdots,P_n(a,b),\cdots),
\end{eqnarray*}
where $S_n$ and $P_n$ are certain polynomials with integral coefficients in the coordinates of $a$ and $b$
with indices $\leq n$:
\begin{eqnarray*}
S_0(a,b) &=& a_0+b_0,\,\,S_1(a,b)=a_1+b_1-\sum_{0<i<p}\frac{1}{p}{p\choose i}a_0^ib_0^{p-i},\cdots, \\
P_0(a,b) &=& a_0b_0,\,\,P_1(a,b)=a_0^pb_1+b_0^pa_1,\cdots.
\end{eqnarray*}
The unit element of $W(k)$ is the vector $(1,0,\cdots,0,\cdots)$.

Let $n\geq 1$ be an integer. The set $k^n$, with the addition (resp.\ multiplication) defined by the polynomials
$S_i$ (resp.\ $P_i$) for $0\leq i\leq n-1$, is a quotient ring of $W(k)$, denoted by $W_n(k)$, called the ring
of Witt vectors of length $n$ of $k$. We have that $W_1(k)=k$ and $W(k)=\varprojlim W_n(k)$ is the inverse limit
of $W_n(k)$ under the following surjective homomorphisms of rings, which are called restriction maps:
\begin{eqnarray*}
R:W_{n+1}(k)\ra W_n(k),\,\,(a_0,a_1,\cdots,a_n)\mapsto (a_0,a_1,\cdots,a_{n-1}).
\end{eqnarray*}
There is a homomorphism of $W(k)$-modules, which is called a shift map:
\begin{eqnarray*}
V:W(k)\ra W(k),\,\,(a_0,a_1,\cdots,a_n,\cdots)\mapsto (0,a_0,\cdots,a_{n-1},\cdots).
\end{eqnarray*}
By passage to the quotient, we have a homomorphism of $W_{n+1}(k)$-modules:
\begin{eqnarray*}
V:W_n(k)\ra W_{n+1}(k),\,\,(a_0,a_1,\cdots,a_{n-1})\mapsto (0,a_0,\cdots,a_{n-1}).
\end{eqnarray*}
We have the following exact sequences for any $n,m\geq 1$:
\begin{eqnarray}
0\ra W(k)\stackrel{V^n}{\ra} W(k)\stackrel{r}{\ra} W_n(k)\ra 0, \label{es14} \\
0\ra W_m(k)\stackrel{V^n}{\ra} W_{n+m}(k)\stackrel{R^m}{\ra} W_n(k)\ra 0. \label{es15}
\end{eqnarray}

We refer the reader to \cite[II, \S6]{se79} for more details.
\end{defn}

Next, we will give some definitions of liftings over $W_*(k)$, which generalize those 
notions as in Definition \ref{2.1}, where $*=$ null or $n\geq 2$ unless otherwise stated.

\begin{defn}\label{5.2}
Let $X$ be a noetherian scheme over $k$, and $D=\sum D_i$ a reduced Cartier divisor on $X$.
A lifting of $(X,D)$ over $W_*(k)$ consists of a scheme $\wt{X}$ and closed subschemes 
$\wt{D}_i\subset\wt{X}$, all defined and flat over $W_*(k)$, such that 
$X=\wt{X}\times_{\spec W_*(k)}\spec k$ and $D_i=\wt{D}_i\times_{\spec W_*(k)}\spec k$. 
We write $\wt{D}=\sum \wt{D}_i$ and say that $(\wt{X},\wt{D})$ is a lifting of $(X,D)$ 
over $W_*(k)$, if no confusion is likely.

Let $\LL$ be an invertible sheaf on $X$. A lifting of $(X,\LL)$ over $W_*(k)$
consists of a lifting $\wt{X}$ of $X$ over $W_*(k)$ and an invertible sheaf
$\wt{\LL}$ on $\wt{X}$ such that $\wt{\LL}|_X=\LL$. For simplicity, we say that
$\wt{\LL}$ is a lifting of $\LL$ on $\wt{X}$, if no confusion is likely.

Let $X$ be a smooth scheme over $k$. We say that $\wt{X}$ is a smooth lifting of $X$
over $W_*(k)$, if $\wt{X}$ is smooth over $W_*(k)$ and $X=\wt{X}\times_{\spec W_*(k)}\spec k$.
\end{defn}

Assume that $\wt{X}$ is a lifting of $X$ over $W(k)$, then $X_n:=\wt{X}\times_{\spec W(k)}\spec W_n(k)$
is a lifting of $X$ over $W_n(k)$ for any $n\geq 2$. Tensoring (\ref{es14}) and (\ref{es15}) by $\OO_{\wt{X}}$,
we have the following exact sequences for any $n,m\geq 1$:
\begin{eqnarray}
0\ra \OO_{\wt{X}}\stackrel{V^n}{\ra} \OO_{\wt{X}}\stackrel{r}{\ra} \OO_{X_n}\ra 0, \label{es16} \\
0\ra \OO_{X_m}\stackrel{V^n}{\ra} \OO_{X_{n+m}}\stackrel{R^m}{\ra} \OO_{X_n}\ra 0. \label{es17}
\end{eqnarray}

Note that if everything is considered over $W_2(k)$, then Definition \ref{5.3} 
coincides with Definition \ref{2.2} by \cite[Lemma 2.2]{xie11}.

\begin{defn}\label{5.3}
Let $X$ be a smooth scheme over $k$. $X$ is said to be strongly liftable
over $W_*(k)$, if there is a smooth lifting $\wt{X}$ of $X$ over $W_*(k)$, 
such that for any prime divisor $D$ on $X$, $(X,D)$ has a lifting $(\wt{X},\wt{D})$
over $W_*(k)$ as in Definition \ref{5.2}, where $\wt{X}$ is fixed for all liftings $\wt{D}$.
\end{defn}

\begin{lem}\label{5.4}
Let $X$ be a smooth scheme over $k$, $D=\sum D_i$ a reduced divisor on $X$, and $Z\subset X$
a closed subscheme smooth over $k$ of codimension $s\geq 2$. Let $\pi:X'\ra X$ be the blow-up of
$X$ along $Z$ with the exceptional divisor $E$, and $D'=\pi_*^{-1}D$ the strict transform of $D$.
If there are schemes $\wt{Z}\subset\wt{X}$, all defined and smooth over $W_*(k)$, and closed 
subschemes $\wt{D}_i\subset\wt{X}$, all defined and flat over $W_*(k)$, such that 
$\wt{X}\times_{\spec W_*(k)}\spec k=X$, $\wt{Z}\times_{\spec W_*(k)}\spec k=Z$ and 
$\wt{D}_i\times_{\spec W_*(k)}\spec k=D_i$, then $(X',D'+E)$ has a lifting over $W_*(k)$.
\end{lem}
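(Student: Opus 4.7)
The plan is to take $\wt{X}':=\mathrm{Bl}_{\wt{Z}}\wt{X}$ with exceptional divisor $\wt{E}$, and to construct flat liftings $\wt{D}'_i\subset\wt{X}'$ of the prime components $D'_i$ of $D'$; the triple $(\wt{X}',\wt{E}+\sum\wt{D}'_i)$ will then be the desired lifting of $(X',E+D')$ over $W_*(k)$.

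For $\wt{X}'$ and $\wt{E}$: since $\wt{X}$ and $\wt{Z}$ are both smooth over $W_*(k)$ and $\wt{Z}$ has codimension $s$ in $\wt{X}$, the immersion $\wt{Z}\hookrightarrow\wt{X}$ is regular; \'etale-locally $\wt{Z}$ is cut out by part $(x_1,\ldots,x_s)$ of a regular system of parameters, so on each standard blow-up chart $\wt{X}'$ is itself smooth over $W_*(k)$. Since blow-ups along regular embeddings commute with base change, one has $\wt{X}'\times_{\spec W_*(k)}\spec k=\mathrm{Bl}_Z X=X'$. The exceptional divisor $\wt{E}=\PP_{\wt{Z}}(\II_{\wt{Z}}/\II_{\wt{Z}}^2)$ is a projective bundle over the smooth base $\wt{Z}$, hence smooth over $W_*(k)$ with reduction $E$.

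For $\wt{D}'_i$, the natural guess is to take the scheme-theoretic strict transform of the given $\wt{D}_i$ in $\wt{X}'$. The subtlety is that the multiplicity of a local equation $\tilde f_i$ of $\wt{D}_i$ along $\wt{Z}$ may be strictly smaller than the multiplicity $\bar m_i$ of the reduction $\bar f_i$ along $Z$, in which case the reduction of this naive strict transform picks up a spurious copy of $\wt{E}$. To remedy this I would first replace $\wt{D}_i$ with another flat lifting $\wt{D}_i^{\ast}$ of $D_i$ whose local defining equations lie in $\II_{\wt{Z}}^{\bar m_i}$: in \'etale coordinates adapted to $\wt{Z}$, the coefficients of terms of order $<\bar m_i$ in $\tilde f_i$ are forced to be divisible by $p$ (since they must vanish after reduction) and so can be absorbed into the $p$-ambiguity in the lift, e.g.\ by lifting the Taylor expansion of $\bar f_i$ along $Z$ coefficient by coefficient from $\OO_Z$ to $\OO_{\wt{Z}}$. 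On each blow-up chart with $\wt{E}=V(x_1)$, factoring $\wt{\pi}^*(\tilde f_i^{\ast})=x_1^{\bar m_i}g_i^{\ast}$ then gives $\wt{D}'_i:=V(g_i^{\ast})$, which is flat over $W_*(k)$ by the standard non-zero-divisor-modulo-$p$ argument and which reduces correctly to $D'_i$.

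The main obstacle is making the modification from $\wt{D}_i$ to $\wt{D}_i^{\ast}$ globally coherent: locally the correction is canonical once \'etale coordinates are fixed, but globalization reduces to the vanishing of a cohomological obstruction on $\wt{X}$ involving a twist by $\II_{\wt{Z}}^{\bar m_i}$. Granted this, the local strict transforms patch to give global closed subschemes $\wt{D}'_i\subset\wt{X}'$ flat over $W_*(k)$, and $(\wt{X}',\wt{E}+\sum\wt{D}'_i)$ is the desired lifting of $(X',E+D')$.
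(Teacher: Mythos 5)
Your overall skeleton --- blow up $\wt{X}$ along $\wt{Z}$ to get $\wt{X}'$ with exceptional divisor $\wt{E}$, check via the Rees-algebra description and flatness of $\OO_{\wt{X}}/\II_{\wt{Z}}^n$ that $\wt{X}'$ and $\wt{E}$ reduce to $X'$ and $E$, and then lift $D'$ by strict transforms --- is the same as the one the paper relies on (the paper gives no argument here beyond ``similar to [Xie10, Lemma 2.4]'', which is the $W_2(k)$ version of exactly this construction). You also correctly isolate the real subtlety, namely that the order of vanishing of a local equation $\tilde f_i$ of $\wt{D}_i$ along $\wt{Z}$ can be strictly smaller than the multiplicity $m_i$ of $D_i$ along $Z$ (e.g.\ $f=y^2-x^3$, $\tilde f=y^2-x^3+px$ over $W_2(k)$), in which case the naive strict transform of $\wt{D}_i$ does not reduce to $D_i'$.

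The problem is that your remedy is not actually carried out at its only nontrivial point. Locally, replacing $\tilde f_i$ by a lift of the Taylor expansion of $f_i$ along $Z$ indeed produces an equation in $\II_{\wt{Z}}^{m_i}$; but these local corrections differ on overlaps by elements of $p\,\OO_{\wt{X}}$ that need not lie in the ideal generated by $\tilde f_i$, so the corrected local ideals need not glue to an invertible ideal sheaf. Concretely, the images $\tilde f_i \bmod \II_{\wt{Z}}^{m_i}$ assemble into a global section $\bar h$ of $\OO_X(D_i)\otimes\OO_X/\II_Z^{m_i}$, and your modification exists globally only if $\bar h$ lifts to a global section of $\OO_X(D_i)$ (or, allowing the line bundle lifting to vary, lies in the image of the space of first-order deformations of $D_i\subset X$). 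The obstruction sits in a group of the shape $H^1(X,\II_Z^{m_i}\otimes\OO_X(D_i))$, which has no reason to vanish for an arbitrary smooth scheme $X$ as in the hypotheses; ``granted this'' is therefore conceding precisely the step that makes the lemma nontrivial. A secondary, more minor point: your divisibility-by-$p$ argument is phrased for $W_2(k)$ only; over $W_n(k)$ or $W(k)$ one must control the defect modulo the whole ideal $VW(k)$ (equivalently, iterate the correction through the tower $W_{n+1}(k)\to W_n(k)$ using the sequences (\ref{es17})), which you do not address. As written, the proposal identifies the difficulty but does not resolve it.
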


\begin{proof}
It is similar to that of \cite[Lemma 2.4]{xie10}.
\end{proof}

\begin{lem}\label{5.5}
Let $X$ be a smooth scheme over $k$, and $P\in X$ a closed point. If $\wt{X}$ is a smooth lifting of $X$ 
over $W_*(k)$, then there exists a closed subscheme $\wt{P}\subset\wt{X}$ such that $\wt{P}$ is smooth 
over $W_*(k)$ and $\wt{P}\times_{\spec W_*(k)}\spec k=P$.
\end{lem}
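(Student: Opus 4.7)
The plan is to realize $\wt{P}$ as the image of an \'etale section of $\wt{X}\ra\spec W_*(k)$ passing through $P$. First, since $\wt{X}$ is smooth over $W_*(k)$ of some relative dimension $d$ near $P$, the local structure theorem for smooth morphisms (EGA IV, 17.11.4) yields an affine open neighborhood $\wt{U}=\spec\wt{A}$ of $P$ in $\wt{X}$ together with an \'etale morphism $\phi:\wt{U}\ra\A^d_{W_*(k)}=\spec W_*(k)[y_1,\dots,y_d]$. Let $(\bar a_1,\dots,\bar a_d)\in k^d$ denote the image of $P$ in $\A^d_k$ under the reduction of $\phi$, and lift each $\bar a_i$ to some $a_i\in W_*(k)$ via the surjection $W_*(k)\ra k$. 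These choices give a section $\sigma:\spec W_*(k)\ra\A^d_{W_*(k)}$ whose closed fibre hits the image of $P$.

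Next, I would form the fibre product $\wt{Q}:=\wt{U}\times_{\A^d_{W_*(k)}}\spec W_*(k)$ along $\sigma$. As a base change of $\phi$, the projection $\wt{Q}\ra\spec W_*(k)$ is \'etale, and its closed fibre $Q$ is \'etale over $\spec k$; since $k$ is algebraically closed, $Q$ is a disjoint union of reduced $k$-rational points of $\wt{U}$, one of which is $P$. Because $W_*(k)$ is henselian (Artinian local when $*=n\geq 2$, and a complete discrete valuation ring when $*$ is null), the \'etale scheme $\wt{Q}$ decomposes as a disjoint union of copies of $\spec W_*(k)$, one component for each point of $Q$. Let $\wt{P}$ be the component through $P$; by construction $\wt{P}\cong\spec W_*(k)$ is a closed subscheme of $\wt{U}$, smooth (in fact \'etale) over $W_*(k)$, with $\wt{P}\times_{\spec W_*(k)}\spec k=P$.

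Finally, $\wt{P}$ is topologically supported at the single closed point $P\in\wt{X}$, so the closed immersion $\wt{P}\inj\wt{U}$ extends uniquely to a closed immersion $\wt{P}\inj\wt{X}$, giving the required lifting. The only subtle ingredient is the splitting of $\wt{Q}$ into components of the form $\spec W_*(k)$, which follows immediately from the henselianness of $W_*(k)$; the remaining steps are a routine use of the \'etale-local structure of smooth morphisms together with the lifting of a single $k$-point along $W_*(k)\ra k$.
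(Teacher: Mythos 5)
Your argument is correct in substance but takes a genuinely different route from the paper's. The paper never invokes the local structure of smooth morphisms or henselianness: it lifts the point one infinitesimal step at a time, using the formal smoothness of $X_{n+1}\ra\spec W_{n+1}(k)$ to extend the section $\spec W_n(k)\ra X_n\subset X_{n+1}$ across the square-zero extension $W_{n+1}(k)\ra W_n(k)$, and, when $*$ is null, assembles the compatible sections into $\zeta^*=\varprojlim\zeta_n^*:\OO_{\wt{X}}\ra W(k)$. You instead package everything into one application of the henselian property of $W_*(k)$ (Artinian local for $*=n$, a complete discrete valuation ring for $*$ null) to the \'etale $W_*(k)$-scheme $\wt{Q}=\wt{U}\times_{\A^d_{W_*(k)}}\spec W_*(k)$; this is cleaner and avoids the inverse limit, at the cost of invoking the \'etale-local structure theorem. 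A minor imprecision: $\wt{Q}$ decomposes as the components through the points of $Q$, each isomorphic to $\spec W_*(k)$, \emph{plus} possibly a leftover open piece with empty closed fibre (relevant only when $*$ is null); this does not affect the argument.

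There is, however, one step whose stated justification fails. When $*$ is null, $\wt{P}\cong\spec W(k)$ has a generic point in addition to its closed point, so it is \emph{not} topologically supported at the single point $P$, and your reason for promoting the closed immersion $\wt{P}\inj\wt{U}$ to a closed immersion $\wt{P}\inj\wt{X}$ evaporates. The repair is the same fact the paper relies on implicitly when it says that the section $\xi_{n+1}$ of $\eta_{n+1}$ ``defines a closed subscheme'': by construction the composite $\wt{P}\ra\wt{X}\ra\spec W_*(k)$ is the identity, so $\wt{P}\ra\wt{X}$ is a section of the structure morphism, hence a closed immersion once $\wt{X}$ is separated over $W_*(k)$ (an assumption both your proof and the paper's use tacitly). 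With that substitution your proof is complete.
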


\begin{proof}
Let $X_n$ be the induced lifting from $\wt{X}$ of $X$ over $W_n(k)$ for any $n<*$ if $*\neq $ null, or for any $n\geq 2$
if $*=$ null. For simplicity, denote $X_1=X$ and $P_1=P$. Assume that there exists a closed subscheme $P_n\subset X_n$
such that $P_n$ is smooth over $W_n(k)$ and $P_n\times_{\spec W_n(k)}\spec W_m(k)=P_m$ for any $m\leq n$.

Consider the following commutative diagram:
\[
\xymatrix{
\spec W_n(k) \ar@{^{(}->}[d] \ar@{^{(}->}[r]^{\qquad P_n} & X_n \ar@{^{(}->}[r] & X_{n+1} \ar[d]^{\eta_{n+1}} \\
\spec W_{n+1}(k) \ar@{=}[rr] \ar@{-->}[urr]^{\xi_{n+1}} & & \spec W_{n+1}(k)
}
\]
Since $\spec W_n(k)\hookrightarrow\spec W_{n+1}(k)$ is a closed immersion with ideal sheaf square zero and
$\eta_{n+1}:X_{n+1}\ra \spec W_{n+1}(k)$ is smooth, there exists a morphism $\xi_{n+1}:\spec W_{n+1}(k)\ra X_{n+1}$
such that the induced diagrams are commutative. Since $\xi_{n+1}$ is a section of $\eta_{n+1}$, it defines a
closed subscheme $P_{n+1}\subset X_{n+1}$ smooth over $W_{n+1}(k)$. By the upper commutativity, we have
$P_n=P_{n+1}\times_{\spec W_{n+1}(k)}\spec W_n(k)$.

If $*\neq$ null, then the conclusion is proved by induction. If $*=$ null, then we have morphisms $\zeta_n:\spec W_n(k)
\stackrel{\xi_n}{\ra} X_n\inj \wt{X}$, hence homomorphisms $\zeta_n^*:\OO_{\wt{X}}\ra W_n(k)$ which are compatible
for any $n\geq 1$. Thus we obtain an induced homomorphism $\zeta^*=\varprojlim\zeta_n^*:\OO_{\wt{X}}\ra \varprojlim W_n(k)=W(k)$,
which defines a closed subscheme $\wt{P}\subset\wt{X}$ such that $\wt{P}$ is smooth over $W(k)$ and
$\wt{P}\times_{\spec W(k)}\spec k=P$.
\end{proof}

\begin{prop}\label{5.6}
Let $X$ be a smooth scheme over $k$ with $\dim X\geq 2$, $P\in X$ a closed point, and $\pi:X'\ra X$ 
the blow-up of $X$ along $P$. If $X$ is strongly liftable over $W_*(k)$, then $X'$ is strongly liftable 
over $W_*(k)$.
\end{prop}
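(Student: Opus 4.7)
The plan is to combine Lemma \ref{5.5}, which yields a smooth lift of the point $P$, with Lemma \ref{5.4} applied with centre $Z=P$, which then lifts the blow-up together with any chosen divisor. Let $\wt{X}$ be the smooth lifting of $X$ supplied by the strong liftability hypothesis (Definition \ref{5.3}). By Lemma \ref{5.5}, fix once and for all a closed subscheme $\wt{P}\subset\wt{X}$, smooth over $W_*(k)$, with $\wt{P}\times_{\spec W_*(k)}\spec k=P$, and let $\wt{\pi}:\wt{X'}\ra\wt{X}$ be the blow-up of $\wt{X}$ along $\wt{P}$, with exceptional divisor $\wt{E}$. Because blowing up a smooth scheme along a smooth subscheme is smooth over the base and compatible with base change, $\wt{X'}$ is a smooth lifting of $X'$ over $W_*(k)$, and $\wt{E}$ is a flat lifting of $E$; in particular $(X',E)$ has the lifting $(\wt{X'},\wt{E})$.

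It remains, in order to verify Definition \ref{5.3} for this fixed $\wt{X'}$, to lift every other prime divisor of $X'$. Let $D'\neq E$ be a prime divisor on $X'$. Then $D:=\pi(D')$ is a prime divisor on $X$ and $D'=\pi_*^{-1}D$ is its strict transform. By the strong liftability of $X$, $(X,D)$ admits a lifting $(\wt{X},\wt{D})$ with $\wt{D}$ flat over $W_*(k)$. Applying Lemma \ref{5.4} to the data $(\wt{X},\wt{P},\wt{D})$ produces a lifting of $(X',D'+E)$ over $W_*(k)$; inspection of that construction shows that the underlying lifting of $X'$ is the blow-up of $\wt{X}$ along $\wt{P}$, i.e.\ our $\wt{X'}$, so the output provides a flat closed subscheme $\wt{D'}\subset\wt{X'}$ with $\wt{D'}\times_{\spec W_*(k)}\spec k=D'$, as required.

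The step that requires some care is the consistency demanded by Definition \ref{5.3}: one and the same lifting $\wt{X'}$ must accommodate every prime divisor simultaneously. The function of Lemma \ref{5.5} is precisely to fix the smooth lift $\wt{P}$ before any divisor is lifted; this pins down the blow-up $\wt{X'}$ in advance, so that the construction in Lemma \ref{5.4} yields divisors inside this same $\wt{X'}$ regardless of which $D$ is being treated. Once this is arranged, Definition \ref{5.3} is fulfilled and $X'$ is strongly liftable over $W_*(k)$.
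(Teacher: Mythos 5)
Your argument is correct and is precisely the one the paper intends: the paper's proof is just a citation to \cite[Proposition 2.6]{xie10} ``by using Lemmas \ref{5.4} and \ref{5.5},'' and you have spelled out exactly that combination, including the key point that fixing $\wt{P}$ in advance pins down a single $\wt{X'}$ serving all prime divisors at once.
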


\begin{proof}
It is similar to that of \cite[Proposition 2.6]{xie10} by using Lemmas \ref{5.4} and \ref{5.5}.
\end{proof}

\begin{thm}\label{5.7}
Let $X$ be a smooth projective variety. If $H^2(X,\TT_X)=H^2(X,\OO_X)=0$, then $X$ has a smooth lifting over $W(k)$.
\end{thm}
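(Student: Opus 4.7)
The plan is to build a compatible tower of smooth liftings $\wt{X}_n$ of $X$ over $W_n(k)$, together with a compatible tower of lifts of an ample invertible sheaf, and then to algebraize the resulting formal scheme via Grothendieck's existence theorem.

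First I would construct $\{\wt{X}_n\}_{n\geq 1}$ inductively: set $\wt{X}_1=X$, and given a smooth lifting $\wt{X}_n$ of $X$ over $W_n(k)$, use standard deformation theory of smooth schemes to locate the obstruction to a smooth lifting $\wt{X}_{n+1}$ of $\wt{X}_n$ over $W_{n+1}(k)$ in $H^2(X,\TT_X\otimes_{\OO_X}J_n)$, where $J_n$ is the ideal of the closed immersion $\wt{X}_n\hookrightarrow\wt{X}_{n+1}$. Tensoring the exact sequence (\ref{es15}) with $m=1$ by $\OO_{\wt{X}_{n+1}}$ identifies $J_n$ with $\OO_X$, so the obstruction lives in $H^2(X,\TT_X)=0$ and $\wt{X}_{n+1}$ exists.

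Next, I would fix an ample invertible sheaf $\LL$ on $X$ and lift it step by step to $\LL_n$ on $\wt{X}_n$ with $\LL_{n+1}|_{\wt{X}_n}\cong\LL_n$. From the exact sequence of units analogous to (\ref{es3}),
\[
0\to\OO_X\to\OO^*_{\wt{X}_{n+1}}\to\OO^*_{\wt{X}_n}\to 1,
\]
the obstruction class to such a lift sits in $H^2(X,\OO_X)=0$, so a compatible family $\{\LL_n\}$ exists.

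Finally, $\{\wt{X}_n,\LL_n\}$ forms a formal scheme $\hat{X}$ over $\mathrm{Spf}\,W(k)$ together with a formal invertible sheaf $\hat{\LL}$ whose reduction mod $p$ is the ample sheaf $\LL$. By Grothendieck's existence theorem (formal GAGA), $\hat{X}$ algebraizes to a projective scheme $\wt{X}$ over $\spec W(k)$ with $\wt{X}\times_{\spec W(k)}\spec W_n(k)\cong\wt{X}_n$ for every $n$. Smoothness of $\wt{X}$ over $W(k)$ then follows from the local criterion of smoothness applied to the smoothness of each reduction $\wt{X}_n$ over $W_n(k)$. The expected main obstacle is the algebraization step: without a formal lift of an ample bundle, the formal scheme $\hat{X}$ need not descend to an honest scheme over $\spec W(k)$, which is precisely why the hypothesis $H^2(X,\OO_X)=0$ is required in addition to $H^2(X,\TT_X)=0$.
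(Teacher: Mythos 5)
Your argument is correct and is essentially the standard proof of \cite[Expos\'e III, Th\'eor\`eme 7.3]{sga1} — inductive smooth lifting over the $W_n(k)$ with obstructions in $H^2(X,\TT_X)$, lifting an ample invertible sheaf with obstructions in $H^2(X,\OO_X)$, and algebraization by Grothendieck's existence theorem — which is exactly the result the paper invokes after noting that $W(k)$ is a complete discrete valuation ring with residue field $k$. So you have in effect reproved the cited theorem; the paper's own proof is a one-line citation, and your unpacking of it is sound, including the identification of the ideal $\ker(\OO_{\wt{X}_{n+1}}\to\OO_{\wt{X}_n})$ with $\OO_X$ via the sequence (\ref{es17}) with $m=1$.
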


\begin{proof}
By \cite[II, Theorem 3]{se79}, $W(k)$ is a complete discrete valuation ring with residue field $k$, so the
conclusion follows from \cite[Expos\'e III, Th\'eor\`eme 7.3]{sga1}.
\end{proof}

\begin{thm}\label{5.8}
The following varieties are strongly liftable over $W(k)$:
\begin{itemize}
\item[(i)] $\A^n_k$, $\PP^n_k$ and a smooth projective curve;
\item[(ii)] a smooth projective variety of Picard number 1 which is a complete intersection in $\PP^n_k$.
\end{itemize}
\end{thm}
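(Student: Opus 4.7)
The plan is to treat each item separately. In every case I will exhibit an explicit global smooth lifting $\wt X$ over $W(k)$ and then show that every prime divisor $D$ on $X$ admits a flat closed subscheme lifting inside this single fixed $\wt X$, as required by Definition \ref{5.3}.

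For the three basic varieties in (i) the liftings are canonical. Take $\wt X = \A^n_{W(k)}$ and $\wt X = \PP^n_{W(k)}$ respectively for affine and projective space. For a smooth projective curve $C$ one has $H^2(C,\TT_C) = H^2(C,\OO_C) = 0$ purely for dimensional reasons, so Theorem \ref{5.7} produces a smooth lifting $\wt C$ over $W(k)$. A prime divisor on $\A^n_k$ or $\PP^n_k$ is cut out by an irreducible (homogeneous, in the projective case) polynomial $f$ over $k$, and one lifts by choosing any coefficient lift $\tilde f$ to $W(k)$; the closed subscheme $V(\tilde f)$ is flat over $W(k)$ because $f$ is a non-zero-divisor in $k[x_0,\dots,x_n]$ and $W(k)$-flatness of the ambient lift propagates $p$-torsion-freeness to the quotient. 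A prime divisor on $C$ is just a closed point $P$, and Lemma \ref{5.5} directly supplies a smooth, hence flat, closed subscheme $\wt P \subset \wt C$ lifting $P$.

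For (ii), write $X = V(F_1,\dots,F_r) \subset \PP^n_k$ with the $F_i$ homogeneous of degrees $d_1,\dots,d_r$, pick degree-preserving coefficient lifts $\tilde F_i \in W(k)[x_0,\dots,x_n]$, and set $\wt X = V(\tilde F_1,\dots,\tilde F_r) \subset \PP^n_{W(k)}$. Smoothness of $\wt X$ over $W(k)$ follows from the Jacobian criterion (the lifted Jacobian reduces to the Jacobian that witnesses smoothness of $X$) together with flatness, which holds since $(\tilde F_1,\dots,\tilde F_r)$ is a regular sequence in each local ring. For a prime divisor $D$ on $X$, the Picard-number-one hypothesis together with the Lefschetz hyperplane theorem for smooth complete intersections gives $\Pic(X) = \Z \cdot \OO_X(1)$, so $D \sim mH$ for some $m \geq 1$. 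The Koszul resolution of the ideal of $X$ in $\PP^n_k$ yields the surjectivity of $H^0(\PP^n_k,\OO(m)) \to H^0(X,\OO_X(m))$, so $D$ is cut out on $X$ by a homogeneous polynomial $G \in k[x_0,\dots,x_n]_m$. Taking any coefficient lift $\tilde G$ to $W(k)$ and setting $\wt D = V(\tilde G)\cap \wt X$ produces the required lifting of $D$, and flatness of $\wt D$ over $W(k)$ follows from the same $p$-torsion argument as in (i).

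The main technical point is the flatness verification for the lifted divisors: in every case this reduces, via $W(k)$-flatness of $\wt X$, to the observation that the lifted defining equation is a non-zero-divisor in $\OO_{\wt X}$, which in turn follows from the fact that the original equation does not vanish on any component of $X$. The other delicate step, specific to (ii), is the identification of an arbitrary prime divisor as the restriction of a hypersurface from $\PP^n_k$; this is exactly where the Picard-number-one assumption together with Lefschetz is indispensable, since without it a prime divisor need not be linearly equivalent to any multiple of the hyperplane class and the explicit lifting recipe would break down.
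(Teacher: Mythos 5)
Your proposal is correct and follows essentially the same route as the paper, which simply defers to the proofs of Lemmas 3.1--3.3 of \cite{xie10} (lifting the defining equations of $\A^n_k$, $\PP^n_k$ and of a hypersurface section cutting out a prime divisor, with flatness reduced to $p$-torsion-freeness over the discrete valuation ring $W(k)$) and, for curves, to Theorem \ref{5.7} and Lemma \ref{5.5} exactly as you do. The details you supply --- the non-zero-divisor argument for flatness, the use of the Koszul resolution to restrict divisors from $\PP^n_k$ in case (ii) --- are the intended ones.
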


\begin{proof}
It is similar to those of \cite[Lemmas 3.1--3.3]{xie10}, whereas for a smooth projective curve, its strong liftability
follows from Theorem \ref{5.7} and Lemma \ref{5.5}.
\end{proof}

Let $X$ be a smooth scheme over $k$, $\wt{X}$ a lifting of $X$ over $W(k)$, $\LL$ an invertible sheaf on $X$, and
$\wt{\LL}$ a lifting of $\LL$ on $\wt{X}$. Tensoring (\ref{es16}) by $\wt{\LL}$, we have the following exact sequence:
\begin{eqnarray*}
0\ra \wt{\LL}\stackrel{V}{\ra} \wt{\LL}\stackrel{r}{\ra} \LL\ra 0.
\end{eqnarray*}

There is a criterion for strong liftability over $W(k)$ similar to Proposition \ref{2.3}.

\begin{prop}\label{5.9}
Let $X$ be a smooth scheme over $k$, and $\wt{X}$ a smooth lifting of $X$ over $W(k)$. 
If for any prime divisor $D$ on $X$, there is a lifting $\wt{\LL}_D$ of $\LL_D=\OO_X(D)$ on $\wt{X}$ 
such that the natural map $r:H^0(\wt{X},\wt{\LL}_D)\ra H^0(X,\LL_D)$ is surjective, 
then $X$ is strongly liftable over $W(k)$.
\end{prop}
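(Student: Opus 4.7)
The plan is to mimic the proof of Proposition \ref{2.3}. A prime divisor $D$ on $X$ is cut out by a canonical section $s_D\in H^0(X,\LL_D)$, the image of $1$ under $\OO_X\inj\OO_X(D)$. By the surjectivity of $r$, choose a lift $\wt{s}_D\in H^0(\wt{X},\wt{\LL}_D)$, and define $\wt{D}\subset\wt{X}$ to be the vanishing scheme of $\wt{s}_D$. Since $r(\wt{s}_D)=s_D$, reducing modulo $p$ immediately gives $\wt{D}\times_{\spec W(k)}\spec k=D$. The only remaining points are: (a) $\wt{s}_D$ is locally a non-zero-divisor in $\OO_{\wt{X}}$, so that $\wt{D}$ is an effective Cartier divisor; and (b) $\wt{D}$ is flat over $W(k)$.

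Both checks are local. I work in the stalk $A=\OO_{\wt{X},x}$ at a point $x\in X\subset\wt{X}$ and trivialise $\wt{\LL}_D$ near $x$, writing $\wt{s}_D=\wt{f}\in A$ with reduction $f\in A/pA=\OO_{X,x}$. Smoothness of $X/k$ makes $A/pA$ a regular local ring, hence an integral domain, and $D$ being a prime divisor gives $f\ne 0$, so $f$ is a non-zero-divisor in $A/pA$. Smoothness, hence flatness, of $\wt{X}/W(k)$ makes $p$ a non-zero-divisor in $A$, while Krull's intersection theorem yields $\bigcap_n p^nA=0$. From $\wt{g}\wt{f}=0$ one obtains $\bar{\wt g}\,f=0$, hence $\bar{\wt g}=0$, i.e.\ $\wt g=p\wt g_1$; cancelling the non-zero-divisor $p$ from $p\wt g_1\wt f=0$ gives $\wt g_1\wt f=0$, and iterating places $\wt g$ in $\bigcap_n p^nA=0$. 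This proves (a).

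For (b), the local flatness criterion over the DVR $W(k)$ reduces to the statement that $p$ is a non-zero-divisor on $\OO_{\wt{D},x}=A/\wt{f}A$. Suppose $p\wt{g}=\wt{h}\wt{f}$; reducing mod $p$ gives $\bar{\wt h}\,f=0$ in the integral domain $A/pA$, hence $\wt{h}=p\wt{h}_1$, and cancelling $p$ from $p\wt g=p\wt h_1\wt f$ yields $\wt g=\wt{h}_1\wt{f}\in\wt{f}A$, as required. Combining (a) and (b), $\wt{D}$ is a Cartier divisor on $\wt{X}$, flat over $W(k)$, with reduction $D$, so the given smooth lifting $\wt{X}$ fulfils the conditions of Definition \ref{5.3}.

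The argument is essentially routine commutative algebra once the construction $\wt{D}=V(\wt{s}_D)$ is in place: no deformation-theoretic obstruction in any $H^2$ intervenes, because the hypothesis hands over the section lift directly. The only bit of care needed is the verification of $p$-torsion freeness on the two quotients in turn, which hinges on smoothness of $X/k$ (so that $A/pA$ is a domain) and flatness of $\wt{X}/W(k)$ (so that $p$ is regular on $A$); there is no serious obstacle, and in contrast to the $W_2(k)$ case, no further work is required to glue finitely many levels together because the section lift is already provided over all of $W(k)$.
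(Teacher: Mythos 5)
Your proposal is correct and follows essentially the same route as the paper: lift the defining section of $D$ via the surjectivity of $r$, take $\wt{D}$ to be its divisor of zeros, observe that its reduction is $D$, and verify flatness of $\wt{D}$ over $W(k)$. The only difference is cosmetic: the paper checks flatness via the local criterion ($\tor^{W(k)}_1(\OO_{\wt{D}},k)=0$, using the sequence $0\ra\OO_{\wt{X}}(-\wt{D})\ra\OO_{\wt{X}}\ra\OO_{\wt{D}}\ra 0$), whereas you use the equivalent fact that over the DVR $W(k)$ flatness is $p$-torsion-freeness and check it by hand, along the way making explicit the non-zero-divisor property of $\wt{s}_D$ that the paper leaves implicit.
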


\begin{proof}
Let $s\in H^0(X,\LL_D)$ be a section corresponding to the prime divisor $D$. By assumption, there is a section
$\wt{s}\in H^0(\wt{X},\wt{\LL}_D)$ with $r(\wt{s})=s$. Let $\wt{D}=\divisor_0(\wt{s})$ be the divisor of zeros
of $\wt{s}$. Then it is easy to see that $\wt{D}\times_{\spec W(k)}\spec k=D$. To prove the flatness of $\wt{D}$
over $W(k)$, note first that $\OO_{\wt{D}}$ is idealwise separated for the maximal ideal of $W(k)$, so by the local
criteria of flatness \cite[(20.C) Theorem 49]{ma80}, it suffices to prove that $\tor^{W(k)}_1(\OO_{\wt{D}},k)=0$.
Considering the following exact sequence:
\begin{eqnarray*}
0\ra \OO_{\wt{X}}(-\wt{D})\ra \OO_{\wt{X}}\ra \OO_{\wt{D}}\ra 0,
\end{eqnarray*}
and taking its long exact sequence for $-\otimes_{W(k)}k$, we obtain an exact sequence:
\begin{eqnarray}
0\ra \tor^{W(k)}_1(\OO_{\wt{D}},k)\ra \OO_{\wt{X}}(-\wt{D})\otimes_{W(k)}k\ra \OO_X\ra \OO_D\ra 0. \label{es18}
\end{eqnarray}
To prove that $\tor^{W(k)}_1(\OO_{\wt{D}},k)=0$, by (\ref{es18}), it suffices to prove that $\OO_{\wt{X}}(-\wt{D})
\otimes_{W(k)}k=\OO_X(-D)$, which is clear from the construction of $\wt{D}$. Thus $\wt{D}$ is flat over $W(k)$,
whence $\wt{D}$ is a lifting of $D$ over $W(k)$.
\end{proof}

\begin{thm}\label{5.10}
A smooth projective toric variety is strongly liftable over $W(k)$.
\end{thm}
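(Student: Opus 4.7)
The strategy is to apply Proposition \ref{5.9}, extending the $W_2(k)$ argument of \cite{xie10} to the full Witt ring by exploiting the fact that the toric construction is intrinsically combinatorial and behaves uniformly in the base ring.

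First, I would build the smooth lifting $\wt{X}$ of $X$ over $W(k)$ directly from the fan. If $X$ corresponds to a smooth projective fan $\Sigma$ in a lattice $N$ with dual lattice $M$, then gluing the affine toric pieces $\spec W(k)[S_\sigma]$ (for $\sigma \in \Sigma$) along the face relations produces a smooth toric $W(k)$-scheme $\wt{X}$ with special fiber $X$. Every torus-invariant prime divisor $D_\rho$, for $\rho \in \Sigma(1)$, has a canonical lift $\wt{D}_\rho\subset\wt{X}$, flat over $W(k)$, and every torus-invariant Cartier divisor $D_0 = \sum_\rho a_\rho D_\rho$ lifts to $\wt{D}_0 = \sum_\rho a_\rho \wt{D}_\rho$, together with the associated invertible sheaf $\OO_{\wt{X}}(\wt{D}_0)$.

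To apply Proposition \ref{5.9}, let $D$ be an arbitrary prime divisor on $X$. Since the divisor class group of a smooth projective toric variety is generated by classes of torus-invariant divisors, there exists a torus-invariant $D_0$ with $\OO_X(D) \cong \OO_X(D_0)$, and I set $\wt{\LL}_D := \OO_{\wt{X}}(\wt{D}_0)$. On the torus-invariant affine cover $\{\spec W(k)[S_\sigma]\}$ of $\wt{X}$, each $M$-graded piece of the \v{C}ech complex computing $H^*(\wt{X}, \wt{\LL}_D)$ is either zero or a free $W(k)$-module of rank one generated by a character $\chi^m$. The usual polytope description then identifies $H^0(\wt{X}, \wt{\LL}_D)$ with the free $W(k)$-module on the lattice points of the polytope $P_{D_0}=\{m\in M_\R\,|\,\langle m,v_\rho\rangle\geq -a_\rho\,\,\hbox{for all}\,\,\rho\}$, and reduction mod $p$ sends this basis bijectively onto the analogous basis of $H^0(X, \LL_D)$. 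Hence the map $r:H^0(\wt{X},\wt{\LL}_D)\ra H^0(X,\LL_D)$ is surjective, and Proposition \ref{5.9} yields strong liftability.

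The only nontrivial point to verify is the base-independence of the combinatorial formula for sections, which I regard as the main obstacle. It amounts to checking the identity $W(k)[S_\sigma] = \Z[S_\sigma] \otimes_\Z W(k)$ together with the fact that the $M$-grading on $\Z[S_\sigma]$ has free graded pieces of rank zero or one; these together imply that the \v{C}ech complex over $W(k)$ is obtained from the universal complex over $\Z$ by flat base change, so the cohomology commutes with the reduction $W(k)\ra k$. This is standard in toric geometry, and once it is in place there is no further obstruction to the proof.
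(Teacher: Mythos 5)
Your proposal is correct and follows essentially the same route as the paper: the paper's proof defers to the $W_2(k)$ argument of \cite[Theorem 3.1]{xie11}, which likewise uses the canonical toric lifting from the fan, reduces an arbitrary prime divisor to a torus-invariant one via linear equivalence, invokes \cite[Page 66, Lemma]{fu93} for the lattice-point description of global sections over any base, and concludes by the criterion of Proposition \ref{5.9}. Your explicit verification that the combinatorial description of $H^0$ is obtained by flat base change from $\Z$ is exactly the content being cited.
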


\begin{proof}
It is similar to that of \cite[Theorem 3.1]{xie11}, which depends heavily on \cite[Page 66, Lemma]{fu93}.
\end{proof}

\begin{thm}\label{5.11}
A smooth projective rational surface is strongly liftable over $W(k)$.
\end{thm}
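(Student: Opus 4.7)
The plan is to mirror the strategy used for the $W_2(k)$-analog by combining the classification of smooth projective rational surfaces with the results already established in this section. First I would invoke the classification: over an algebraically closed field, every smooth projective rational surface $X$ admits a birational morphism $\rho: X\to X_0$ factoring as a composition of finitely many blow-ups at closed points, where $X_0$ is one of the minimal rational models $\PP^2_k$ or a Hirzebruch surface $\F_n = \PP_{\PP^1}(\OO_{\PP^1}\oplus\OO_{\PP^1}(n))$ with $n\geq 0$ and $n\neq 1$.

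Next I would settle the base cases. The projective plane $\PP^2_k$ is strongly liftable over $W(k)$ by Theorem \ref{5.8}(i). Each Hirzebruch surface $\F_n$ is a smooth projective toric variety, hence strongly liftable over $W(k)$ by Theorem \ref{5.10}. So for every minimal rational model the conclusion is already known.

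For the inductive step I would factor $\rho$ as $X=X_m\to X_{m-1}\to\cdots\to X_1\to X_0$, where each arrow $X_{i+1}\to X_i$ is the blow-up of a closed point $P_i\in X_i$. Starting from $X_0$ and applying Proposition \ref{5.6} at every step, a straightforward induction on $i$ yields that every $X_i$, and in particular $X=X_m$, is strongly liftable over $W(k)$.

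The step I expect to be most delicate is already isolated in Proposition \ref{5.6}, whose proof rests on Lemmas \ref{5.4} and \ref{5.5}: Lemma \ref{5.5} must produce, from a fixed smooth lift $\wt{X}$ of $X_i$ over $W(k)$, a compatible tower of smooth lifts of the closed point $P_i\in X_i$ over each $W_n(k)$, and then assemble them into a single closed subscheme of $\wt{X}$ smooth over $W(k)$ via an inverse-limit argument on the defining homomorphisms. Once that machinery is in place, Lemma \ref{5.4} yields liftings of the strict transform and the exceptional divisor simultaneously with $\wt{X}$, and the induction over blow-ups carries through cleanly over $W(k)$ itself rather than only over each truncation $W_n(k)$, finishing the proof.
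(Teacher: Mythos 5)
Your proposal is correct and follows essentially the same route as the paper: reduce to the minimal models $\PP^2_k$ (Theorem \ref{5.8}) and the Hirzebruch surfaces (toric, hence Theorem \ref{5.10}), then induct over the sequence of point blow-ups using Proposition \ref{5.6}. Your additional commentary on the inner workings of Lemmas \ref{5.4} and \ref{5.5} accurately identifies where the real work lies, but the proof itself matches the paper's.
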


\begin{proof}
Let $X$ be a smooth projective rational surface over $k$. If $X\cong\PP^2_k$, then the conclusion was 
already proved in Theorem \ref{5.8}. If $X\not\cong\PP^2_k$, then there is a birational morphism
$X\ra F_n=\PP(\OO_{\PP^1_k}\oplus\OO_{\PP^1_k}(-n))$, which is the composition of a sequence of
blow-ups along closed points. Since the Hirzebruch surface $F_n$ is toric, hence is strongly
liftable over $W(k)$ by Theorem \ref{5.10}. It follows from Proposition \ref{5.6} that $X$ is
strongly liftable over $W(k)$.
\end{proof}

Finally, we give two results on cyclic covers over $W(k)$.

\begin{thm}\label{5.12}
Let $X$ be a smooth projective variety, and $\LL$ an invertible sheaf on $X$. Let $N$ be a positive
integer prime to $p$, and $D$ an effective divisor on $X$ with $\LL^N=\OO_X(D)$ and $\Sing(D_{\rm red})
=\emptyset$. Let $\pi:Y\ra X$ be the cyclic cover obtained by taking the $N$-th root out of $D$.
If $X$ has a lifting $\wt{X}$ over $W(k)$, $\LL$ has a lifting $\wt{\LL}$ on $\wt{X}$, and $D$ has a
lifting $\wt{D}$ on $\wt{X}$ with $\wt{\LL}^N=\OO_{\wt{X}}(\wt{D})$, then $Y$ is a smooth projective
scheme liftable over $W(k)$.
\end{thm}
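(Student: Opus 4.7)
The plan is to construct $\wt{Y}$ directly over $\wt{X}$ by lifting the Esnault--Viehweg cyclic-cover algebra from $X$ to $\wt{X}$. I would define $\wt{\mathcal{A}} := \bigoplus_{i=0}^{N-1} \wt{\LL}^{-i}$ as an $\OO_{\wt{X}}$-algebra, where the multiplication maps $\wt{\LL}^{-i}\otimes\wt{\LL}^{-j}\ra \wt{\LL}^{-(i+j-N)}$ (for $i+j\geq N$) are given by the section $\wt{s}\in H^0(\wt{X},\wt{\LL}^N)\cong H^0(\wt{X},\OO_{\wt{X}}(\wt{D}))$ corresponding to $\wt{D}$, and then set $\wt{Y} := \Spec_{\wt{X}} \wt{\mathcal{A}}$. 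Since $\wt{\mathcal{A}}$ is locally free of rank $N$ over $\OO_{\wt{X}}$ and $\wt{X}$ is flat over $W(k)$, the morphism $\wt{Y}\ra\wt{X}$ is finite flat, and $\wt{Y}$ is flat over $W(k)$.

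Next I would verify that $\wt{Y}$ is a lifting of $Y$. Reduction modulo $p$ identifies $\wt{\mathcal{A}}\otimes_{W(k)}k$ with $\bigoplus_{i=0}^{N-1} \LL^{-i}$ equipped with the multiplication determined by $s=\wt{s}\otimes 1$; this is exactly the cyclic-cover algebra of $Y$, so $\wt{Y}\times_{\spec W(k)}\spec k\cong Y$. Projectivity of $Y$ is automatic because $\pi:Y\ra X$ is finite and $X$ is projective.

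The main obstacle is to prove that $\wt{Y}$ is smooth over $W(k)$. I would verify this locally: around a point $\wt{x}\in\wt{X}$ lying over $x\in X$, after trivializing $\wt{\LL}$ on a neighborhood, $\wt{s}$ corresponds to an element $\wt{f}\in\OO_{\wt{X},\wt{x}}$ lifting a local equation $f$ of $D$, and $\wt{Y}$ is locally defined by $\wt{z}^N=\wt{f}$. When $x\notin D$, $\wt{f}$ is a unit after shrinking, and since $p\nmid N$ the cover $\wt{Y}\ra\wt{X}$ is étale at $\wt{x}$. When $x\in D$, the hypothesis $\Sing(D_{\rm red})=\emptyset$ implies that $D$ is locally cut out, up to a unit, by a single local parameter $t$ of $\OO_{X,x}$; consequently $\wt{f}=\wt{u}\cdot\wt{t}$ with $\wt{u}$ a unit and $\wt{t}$ part of a regular system of parameters of $\OO_{\wt{X},\wt{x}}$. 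The relation $\wt{z}^N=\wt{u}\wt{t}$ then lets us eliminate $\wt{t}=\wt{u}^{-1}\wt{z}^N$, exhibiting the completed local ring of $\wt{Y}$ at a point over $\wt{x}$ as a formal power series ring over $W(k)$ in $\wt{z}$ and the remaining parameters of $\wt{X}$, which is formally smooth over $W(k)$. This local smoothness computation is the crux; the flatness and base-change identification with $Y$ above are routine.
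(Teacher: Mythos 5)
The paper itself gives no details here (it defers to the proof of \cite[Theorem 4.1]{xie11}), and your overall strategy---lift the Esnault--Viehweg cyclic-cover algebra to $\wt{X}$, take $\mathbf{Spec}$, and check flatness, base change and local smoothness---is exactly the intended route. In the case where $D$ is reduced your argument is essentially complete (modulo the routine remark that the flat closed subscheme $\wt{D}$ is automatically an effective Cartier divisor on $\wt{X}$, so that the section $\wt{s}$ exists).

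There is, however, a genuine gap in the generality actually claimed: the hypothesis is $\Sing(D_{\rm red})=\emptyset$, not $\Sing(D)=\emptyset$, so $D$ is allowed to be non-reduced, say $D=\sum_j a_jD_j$ with some $a_j>1$. The cyclic cover ``obtained by taking the $N$-th root out of $D$'' is, by definition (cf.\ \cite[Chapter 3]{ev}, and see the proof of Lemma \ref{3.5} in this paper), the \emph{normalization} of $\mathbf{Spec}_X\bigoplus_{i=0}^{N-1}\LL^{-i}$, i.e.\ $Y=\mathbf{Spec}_X\bigoplus_{i=0}^{N-1}(\LL^{(i)})^{-1}$ with $\LL^{(i)}=\LL^i\otimes\OO_X(-[\tfrac{i}{N}D])$. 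When $D$ is non-reduced these two algebras differ: your $\wt{Y}=\mathbf{Spec}_{\wt{X}}\bigoplus\wt{\LL}^{-i}$ then has special fiber the non-normal scheme $\mathbf{Spec}_X\bigoplus\LL^{-i}$, which is not $Y$, and your local smoothness computation also breaks down, since $D$ is no longer cut out by a local parameter $t$ but by $ut^{a}$ with $a>1$ (the local model $\wt{z}^N=\wt{u}\,\wt{t}^{a}$ is not smooth, nor even normal, for $a>1$). To repair this you must lift the normalized algebra, i.e.\ set $\wt{\LL}^{(i)}=\wt{\LL}^i\otimes\OO_{\wt{X}}(-[\tfrac{i}{N}\wt{D}])$ and take $\wt{Y}=\mathbf{Spec}_{\wt{X}}\bigoplus_{i=0}^{N-1}(\wt{\LL}^{(i)})^{-1}$; this in turn requires you to say what $[\tfrac{i}{N}\wt{D}]$ means, i.e.\ to produce a decomposition $\wt{D}=\sum_j a_j\wt{D}_j$ with $\wt{D}_j$ lifting the (pairwise disjoint, smooth) components $D_j$ --- a point that needs an argument, since an arbitrary lifting of the Cartier divisor $a_jD_j$ need not be of the form $a_j\wt{D}_j$ (e.g.\ $\wt{t}^{\,2}+p$ lifts $t^2$ but is not a square, and the resulting $\wt{z}^N=\wt{t}^{\,2}+p$ has non-smooth special fiber after normalization fails to commute with reduction). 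So either restrict to reduced $D$, where your proof stands, or carry out the lifted $\LL^{(i)}$-construction with the component-wise lifting made explicit.
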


\begin{proof}
It is similar to that of \cite[Theorem 4.1]{xie11}.
\end{proof}

\begin{cor}\label{5.13}
Let $X$ be a smooth projective toric variety, and $\LL$ an invertible sheaf on $X$.
Let $N$ be a positive integer prime to $p$, and $D$ an effective divisor on $X$ with
$\LL^N=\OO_X(D)$ and $\Sing(D_{\rm red})=\emptyset$.
Let $\pi:Y\ra X$ be the cyclic cover obtained by taking the $N$-th root out of $D$.
Then $Y$ is a smooth projective scheme liftable over $W(k)$.
\end{cor}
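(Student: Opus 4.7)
My plan is to verify the four hypotheses of Theorem \ref{5.12}: a smooth projective $W(k)$-lifting $\wt{X}$ of $X$, a compatible lifting $\wt{D}$ of $D$, a lifting $\wt{\LL}$ of $\LL$, and the relation $\wt{\LL}^N \cong \OO_{\wt{X}}(\wt{D})$. Once these are in place, Theorem \ref{5.12} delivers the $W(k)$-lifting of $Y$ directly.

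First, by Theorem \ref{5.10}, $X$ is strongly liftable over $W(k)$, so I fix a smooth lifting $\wt{X}$ for which every prime divisor on $X$ lifts over this same $\wt{X}$. Writing $D = \sum m_i D_i$ with distinct prime components $D_i$, strong liftability gives $W(k)$-flat closed subschemes $\wt{D}_i \subset \wt{X}$ with $\wt{D}_i \times_{\spec W(k)} \spec k = D_i$. Because $X$ is smooth and $\wt{X}$ is smooth over $W(k)$, each $\wt{D}_i$ is automatically an effective Cartier divisor, so $\wt{D} := \sum m_i \wt{D}_i$ is an effective Cartier divisor on $\wt{X}$ lifting $D$.

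Next, to produce $\wt{\LL}$ with $\wt{\LL}^N \cong \OO_{\wt{X}}(\wt{D})$, I plan to exploit the cohomological rigidity of line bundles on a toric variety. Set $X_n := \wt{X} \times_{\spec W(k)} \spec W_n(k)$. A smooth projective toric variety satisfies $H^i(X, \OO_X) = 0$ for all $i \geq 1$; combining this with the exact sequences (\ref{es17}) and inducting on $n$ yields $H^i(X_n, \OO_{X_n}) = 0$ for all $n \geq 1$ and all $i \geq 1$. The multiplicative exponential sequence $0 \to \OO_X \to \OO^*_{X_{n+1}} \to \OO^*_{X_n} \to 1$, namely the analogue of (\ref{es3}) at each step $W_n \to W_{n+1}$, then forces the restriction $\Pic(X_{n+1}) \to \Pic(X_n)$ to be bijective at every level: the obstruction group $H^2(X_n, \OO_{X_n})$ and the ambiguity group $H^1(X_n, \OO_{X_n})$ both vanish. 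By formal GAGA applied to the projective scheme $\wt{X}/W(k)$, the restriction $\Pic(\wt{X}) \to \Pic(X)$ is therefore also bijective. In particular $\LL$ has a unique lifting $\wt{\LL}$ on $\wt{X}$, and the two elements $\wt{\LL}^N$ and $\OO_{\wt{X}}(\wt{D})$ of $\Pic(\wt{X})$, both restricting to $\LL^N = \OO_X(D)$, must coincide.

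With all hypotheses of Theorem \ref{5.12} in place, the conclusion is immediate: $Y$ is a smooth projective scheme liftable over $W(k)$. I expect the main obstacle to be precisely the compatibility $\wt{\LL}^N \cong \OO_{\wt{X}}(\wt{D})$; the toric hypothesis on $X$ is what makes the cohomological rigidity argument work, since outside the toric setting one would have to track Picard torsors more carefully and possibly adjust a candidate $\wt{\LL}$ by an element of the kernel of $\Pic(\wt{X}) \to \Pic(X)$.
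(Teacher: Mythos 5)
Your proposal is correct and follows essentially the same route as the paper, which proves Corollary \ref{5.13} by verifying the hypotheses of Theorem \ref{5.12} via the strong liftability of toric varieties (Theorem \ref{5.10}) together with the rigidity of the Picard group coming from $H^1(X,\OO_X)=H^2(X,\OO_X)=0$, exactly as in [Xie11, Corollary 4.4]. Your added step — passing from the bijections $\Pic(X_{n+1})\ra\Pic(X_n)$ to $\Pic(\wt{X})\cong\Pic(X)$ by Grothendieck's existence theorem — is precisely the supplement needed to upgrade that argument from $W_n(k)$ to $W(k)$, and it is sound since the toric lifting $\wt{X}$ is projective over $W(k)$.
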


\begin{proof}
It is similar to that of \cite[Corollary 4.4]{xie11}.
\end{proof}

By Corollary \ref{5.13}, we can obtain a large class of smooth projective varieties 
liftable over $W(k)$. We give an easy example to illustrate this idea.

\begin{ex}\label{5.14}
Let $X=\PP^n_k$, $\LL=\OO_X(1)$ and $N$ a positive integer such that $n\geq 2$, $(N,p)=1$ and $N>n+2$.
Let $H$ be a general element in the linear system of $\OO_X(N)$. Then $H$ is a smooth irreducible
hypersurface of degree $N$ in $X$ with $\LL^N=\OO_X(H)$. Let $\pi:Y\ra X$ be the cyclic cover
obtained by taking the $N$-th root out of $H$. Then by Corollary \ref{5.13}, $Y$ is a smooth projective 
variety liftable over $W(k)$. By Hurwitz's formula, we have $K_Y=\pi^*(K_X+\frac{N-1}{N}H)$.
Since the degree of $K_X+\frac{N-1}{N}H$ is $N-(n+2)>0$, $K_Y$ is an ample divisor on $Y$,
hence $Y$ is of general type.
\end{ex}

\small

\vskip 10mm

\textsc{Qihong Xie}

\textsc{School of Mathematical Sciences, Fudan University,
Shanghai 200433, China}

\textit{E-mail address}: \texttt{qhxie@fudan.edu.cn}

\vskip 5mm

\textsc{Jian Wu}

\textsc{School of Mathematical Sciences, Fudan University,
Shanghai 200433, China}

\textit{E-mail address}: \texttt{10210180021@fudan.edu.cn}

\end{document}